\newtheorem{theorem}{Theorem}[section]
\newtheorem{lemma}[theorem]{Lemma}
\newtheorem{prop}[theorem]{Proposition}
\newtheorem{cor}[theorem]{Corollary}
\theoremstyle{definition}
\newtheorem{defn}{Definition}
\theoremstyle{remark}
\newtheorem{remark}[theorem]{\bf{Remark}}
\numberwithin{equation}{section}
\begin{document}
	
	\title[On local preservation of orthogonality]{On local preservation of orthogonality and its application to isometries}
		\author[Sain, Manna and  Paul  ]{Debmalya Sain, Jayanta Manna and Kallol Paul }

	\newcommand{\acr}{\newline\indent}
\address[Sain]{Department of Mathematics\\ Indian Institute of Information Technology, Raichur\\ Karnataka 584135 \\INDIA}
\email{saindebmalya@gmail.com}

	\address[Manna]{Department of Mathematics\\ Jadavpur University\\ Kolkata 700032\\ West Bengal\\ INDIA}
	\email{iamjayantamanna1@gmail.com}

	\address[Paul]{Department of Mathematics\\ Jadavpur University\\ Kolkata 700032\\ West Bengal\\ INDIA}
	\email{kalloldada@gmail.com}

	\begin{abstract}
		We investigate the local preservation of Birkhoff-James orthogonality at a point by a linear operator on a finite-dimensional Banach space and illustrate its importance in understanding the action of the operator in terms of the geometry of the concerned spaces. In particular, it is shown that such a study is related to the preservation of k-smoothness and the extremal properties of the unit ball of a Banach space. As an application of the results obtained in this direction, we obtain a refinement of the well-known Blanco-Koldobsky-Turnsek characterization of isometries on some polyhedral Banach spaces, including  $ \ell_{\infty}^n, \ell_1^n. $
		
	\end{abstract}

   \subjclass[2020]{Primary 46B20, Secondary 47B01, 47L05}
   \keywords{finite-dimensional Banach spaces; polyhedral Banach spaces; linear operators; Birkhoff-James orthogonality; $k$-smoothness; isometry}

        \maketitle

	\section{Introduction.}
 One of the high points of the isometric theory of Banach spaces is the following fundamental result due to Koldobsky, Blanco, and Turnsek \cite{BT06,K93}:\\
 
 \emph{Every bounded linear operator on a Banach space preserving orthogonality is an isometry multiplied by a constant.}\\
 
 We would like to mention that the first proof of this result was presented by Koldobsky in Theorem 1.5 of \cite{K93}, in the real case. Later Blanco and Turnsek \cite{BT06} obtained a more general proof, covering both the real and the complex cases. Clearly, the above result is concerned with the \emph{global preservation of orthogonality} by bounded linear operators. The purpose of this article is to consider a \emph{local} version of this concept and to obtain some geometric consequences arising out of it. As an application of this study, we prove that in many classical finite-dimensional Banach spaces, including the Euclidean space $ \ell_2^n $ and the polyhedral spaces $ \ell_{\infty}^n, \ell_1^n, $ the Blanco-Koldobsky-Turnsek characterization of isometries (up to multiplication by a constant) can be substantially refined.\\
 
 Let us first fix the notations and the terminologies to be used throughout this article. Letters $ \mathbb{X}, \mathbb{Y} $ denote Banach spaces and the letter $ \mathbb{H} $ denotes a Hilbert space. The unit ball and the unit sphere of the Banach space $ \mathbb{X} $ are denoted by $ B_{\mathbb{X}} $ and $ S_{\mathbb{X}}, $ respectively. The convex hull of a nonempty set $S\subset \mathbb X$ is denoted by co(S). For $x\in\mathbb{X}$, where $X$ is an n-dimensional Banach space, the notation $[ x]_{\mathcal{B}} =(x_1,x_2,\dots,x_n)^t$ denotes that $x_1,x_2,\dots,x_n$ are the coordinates of $x$ relative to an ordered basis $\mathcal{B}$ of $\mathbb{X}$. Let $ \mathbb{L} (\mathbb{X}, \mathbb{Y}) $ be the Banach space of all bounded linear operators from $ \mathbb{X} $ to $ \mathbb{Y}, $ endowed with the usual operator norm. Whenever $ \mathbb{X} = \mathbb{Y}, $ we simply write $ \mathbb{L}(\mathbb{X}, \mathbb{Y}) = \mathbb{L}(\mathbb{X}). $ The identity operator on an $ n $-dimensional Banach space is denoted by $ I_n. $ $ T \in \mathbb{L}(\mathbb{X}, \mathbb{Y}) $ is said to be an isometry if $ \| Tx \| = \| x \| $ for all $ x \in \mathbb{X}. $ One of the fundamental objective of the isometric theory of Banach spaces is to characterize the isometries on $ \mathbb{L}(\mathbb{X}). $ The Blanco-Koldobsky-Turnsek Theorem illustrates the importance of the preservation of orthogonality by bounded linear operators in the whole scheme of things. It should be noted that although there are several distinct notions of orthogonality in the normed setting, the authors in \cite{B35,J47} specifically consider Birkhoff-James orthogonality which is perhaps the most important orthogonality notion in the framework of Banach spaces. Given $x, y \in \mathbb{X}, $ we say that $ x $ is Birkhoff-James orthogonal to $y,$ written as $ x \perp_B y,$ if $ \| x + \lambda y \| \geq \|x\|$ for all scalars $\lambda. $ For any $ x \in \mathbb{X}, $ we use the notation $ x^{\perp_B} := \{ y \in \mathbb{X} : x \perp_B y \}. $ It is clear that the Birkhoff-James orthogonality relation is homogeneous, i.e., $ x \perp_B y $ implies that $ \alpha x \perp_B \beta y $ for any scalars $ \alpha, \beta. $ Moreover, in an inner product space, it is easy to observe that the Birkhoff-James orthogonality $ \perp_B $ coincides with the usual inner product orthogonality $ \perp. $ Given any two subsets $ C, D $ of $ \mathbb{X}, $ we say that $ C \perp_B D $ if $ u \perp_B v $ for all $ u \in C $ and for all $ v \in D. $ $  T \in \mathbb{L}(\mathbb{X}, \mathbb{Y}) $ is said to preserve orthogonality at $ x \in \mathbb{X} $ if for any $  y \in \mathbb{X},$ $ x \perp_B y \Rightarrow Tx \perp_B Ty. $ Birkhoff-James orthogonality is extremely useful in studying many local geometric properties of a Banach space, including smoothness and $ k $-smoothness. We recall that a non-zero $ x \in \mathbb{X} $ is said to be a $ k $-smooth point if $dim~span ~J(x)=k,$ where $ J(x) := \{ f \in S_{\mathbb{X}^*} : f(x) = \| x \| \}. $ Moreover, $ 1 $-smooth points are simply called smooth points. We would like to mention that there exists a large number of articles devoted to the study of smoothness and $ k $-smoothness in Banach spaces, including \cite{DMP22,KS05,LR07,MPD22,MP20,PSG16,SPMR20,W18}.\\
 
 By virtue of the homogeneity property of Birkhoff-James orthogonality, the   Blanco-Koldobsky-Turnsek Theorem asserts that $ T \in \mathbb{L}(\mathbb{X}) $ is a scalar multiple of an isometry if it preserves Birkhoff-James orthogonality at each point of $ S_{\mathbb{X}}. $  In view of this, it is tempting to ask whether the conditions of the said result can be relaxed. More precisely, we ask the following question: \\
 
 \emph{Which subsets $ A $ of $ S_{\mathbb{X}} $ possess the property that if $ T \in \mathbb{L}(\mathbb{X}) $ preserves Birkhoff-James orthogonality at each point of $ A $ then $ T $ is necessarily a scalar multiple of an isometry? In particular, when $ \mathbb{X} $ is finite-dimensional, can such an $ A $ be finite?}\\
 
 On the other hand, since isometries on a Banach space preserve all the geometric properties of the space, it is natural to look for analogous local geometric properties under the assumption of preservation of Birkhoff-James orthogonality by a bounded linear operator at a given point. Let us also mention here that the local preservation of Birkhoff-James orthogonality has been investigated in the recent past, in connection with operator norm attainment \cite{S20,S18} and singular value decomposition of matrices \cite{SRT21}. In this article, we illustrate that such preservation of Birkhoff-James orthogonality at a point by a bounded linear operator also imposes useful restrictions on the order of smoothness of the corresponding image. This observation turns out to be particularly helpful towards understanding isometries in the spirit of the Blanco-Koldobsky-Turnsek Theorem. 
 
 %In particular, we obtain refinements of the said theorem in case of some classical finite-dimensional Banach spaces, including $ \ell_2^n, \ell_{\infty}^n, \ell_1^n,  $ by answering the above question in the affirmative.
 
  Let us now introduce the following definitions, which allow us to capture the central ideas and the objectives of this article:
 
 \begin{defn}
 	Let $ \mathbb{X} $ be a Banach space. A set $A\subset S_{\mathbb{X}}$ is said to be a Koldobsky-set, for the sake of brevity, a $\mathcal K$-set,  if any operator $T\in \mathbb L(\mathbb X)$ that preserves Birkhoff-James orthogonality at each point of $A,$  is necessarily a scalar multiple of an isometry.
 \end{defn}

\begin{defn}
	Let $ \mathbb{X} $ be a Banach space. A set $A\subset S_{\mathbb{X}}$ is said to be a minimal $\mathcal K$-set  if whenever $B\subset A$ is a $\mathcal K$-set, it follows that $B=A.$
\end{defn}
\begin{defn}
	Let $ \mathbb{X} $ be a Banach space having a finite $\mathcal K$-set. Then the $\mathcal K$-number of $ \mathbb{X} $ is defined by $\min$\{$|A|:A $ is a  $\mathcal K$-set of $\mathbb X$\}. We denote the $\mathcal K$-number of $\mathbb X $ by $\kappa(\mathbb X).$ 
	
\end{defn}

It is clear that obtainment of a  $\mathcal K$-set $ A $ for a Banach space $ \mathbb{X} $ with the property that $ A \subsetneq S_{\mathbb{X}} $  will lead to refinement of the Blanco-Koldobsky-Turnsek Theorem for that particular Banach space $ \mathbb{X}. $ Additionally, if such a $\mathcal K$-set is finite, then it will immediately lead to a significant generalization of the Blanco-Koldobsky-Turnsek Theorem. Moreover, a complete description of minimal $\mathcal K$-sets in $ \mathbb{X} $ will give the optimal versions of the said theorem for $ \mathbb{X}. $ The central aim of this article is to study this idea in the context of finite-dimensional Banach spaces. We begin by considering the Euclidean spaces $ \ell_2^n $. In this case, we are able to obtain a satisfactory answer to our query, by characterizing the minimal $\mathcal K$-sets in geometric terms. The problem is far more complicated in the general setting of Banach spaces. Here we consider the finite-dimensional polyhedral Banach spaces and obtain some partial positive results. We recall that a finite-dimensional real Banach space $ \mathbb{X} $ is polyhedral if $ B_{\mathbb{X}} $ has only finitely many extreme points. We refer the readers to \cite{A05,SPBG19} for more information on the geometric and analytic properties of polyhedral Banach spaces. We obtain refinements of the Blanco-Koldobsky-Turnsek Theorem in the specific cases of $ \ell_{\infty}^n, \ell_1^n. $ In order to proceed in this direction, we present an obstructive property on certain kinds of images of linear maps which is also of independent interest. 

\section{Main Results}
  The results obtained in this article are divided into two sections. In the first section, we focus on the Euclidean spaces and obtain the optimal version of the Blanco-Koldobsky-Turnsek Theorem. In the second section, we consider the more general case of finite-dimensional Banach spaces, with special emphasis on polyhedral spaces such as $ \ell_{\infty}^n $ and $ \ell_1^n. $ It is to be noted that as far as this article is concerned, all polyhedral spaces are considered over the real field.
 
 \section*{Section-I}
	
	We begin this section with an easy observation on the size of $\mathcal K$-sets in finite-dimensional Banach spaces.

	\begin{prop}\label{dokn}
		Let $\mathbb X$ be a Banach space with $dim~\mathbb X=n.$ If $A$ is a $\mathcal K$-set then $dim~span~A=n.$ 
	\end{prop}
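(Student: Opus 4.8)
The plan is to argue by contraposition: assuming $\dim~span~A =: k < n$, I will exhibit a single operator $T \in \mathbb{L}(\mathbb{X})$ that preserves Birkhoff-James orthogonality at every point of $A$ yet fails to be a scalar multiple of an isometry, thereby showing that $A$ cannot be a $\mathcal{K}$-set. The driving observation is that the zero vector is Birkhoff-James orthogonal to \emph{every} vector, since $\|0 + \lambda y\| = |\lambda|\,\|y\| \geq 0 = \|0\|$ for all scalars $\lambda$ and all $y$. Consequently, if an operator annihilates a point $x$, then preservation of orthogonality at $x$ holds vacuously.

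Concretely, set $W := span~A$, a proper subspace of $\mathbb{X}$ because $k < n$. Since $A \subseteq S_{\mathbb{X}}$ is nonempty we have $W \neq \{0\}$, so $1 \leq k < n$ and in particular $n \geq 2$. First I would produce a nonzero functional $\phi \in \mathbb{X}^*$ vanishing on $W$ (possible as $W$ is a proper subspace) together with a nonzero vector $w_0 \in \mathbb{X}$, and define the rank-one operator $T$ by $Tx = \phi(x)\, w_0$. Then $W \subseteq \ker T$, so $Tx = 0$ for every $x \in A$; by the observation above, for any $y \in \mathbb{X}$ the implication $x \perp_B y \Rightarrow Tx \perp_B Ty$ holds trivially, because $Tx = 0 \perp_B Ty$. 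Hence $T$ preserves Birkhoff-James orthogonality at each point of $A$.

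Next I would invoke the hypothesis that $A$ is a $\mathcal{K}$-set to conclude that $T$ must be a scalar multiple of an isometry, and then derive the contradiction. The point I expect to require the most care — the genuine crux, though it is short — is verifying that $T$ is \emph{not} a scalar multiple of an isometry. Here I would use that $T$ is nonzero (as $\phi \neq 0$ and $w_0 \neq 0$) while $T$ is not injective, since $W \subseteq \ker T$ with $W \neq \{0\}$. On a finite-dimensional space every \emph{nonzero} scalar multiple $cU$ of an isometry $U$ is injective, because $\|cUx\| = |c|\,\|x\|$ forces $x = 0$ whenever $cUx = 0$; the only non-injective scalar multiple of an isometry is the zero operator. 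Thus a nonzero, non-injective $T$ can be neither the zero operator nor a nonzero scalar multiple of an isometry, contradicting the conclusion forced by the $\mathcal{K}$-set hypothesis. This contradiction rules out $k < n$, leaving $\dim~span~A = n$, as claimed. (The degenerate case $n = 1$ is automatic: a nonempty $A \subseteq S_{\mathbb{X}}$ already satisfies $\dim~span~A = 1 = n$.)
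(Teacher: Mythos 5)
Your proposal is correct and follows essentially the same route as the paper: both construct an operator that annihilates $span~A$ yet is nonzero (the paper kills a basis of $span~A$ and fixes complementary basis vectors, while you use a rank-one operator $x \mapsto \phi(x)w_0$ with $\phi$ vanishing on $span~A$), and both rely on the fact that preservation of Birkhoff-James orthogonality is vacuous at points mapped to zero. Your write-up is in fact more explicit than the paper's, which leaves the vacuous-preservation observation and the non-injectivity argument as "it is clear that."
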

	\begin{proof}
		Suppose on the contrary that $A$ is a $\mathcal K$-set with $dim~span~A=r,$ for some $r<n.$ Let $S=\{e_1,e_2,\dots,e_r\}$ be a basis of $span~A.$ Extend $S$ to a basis $\mathcal B=\{e_1,e_2,\dots,e_r,e_{r+1},\dots,e_n\}$ of $\mathbb X.$ Define a linear operator $T$ on $\mathbb X$ as : $Te_i=0, 1\leq i\leq r,$ and $Te_i=e_i, r+1\leq i\leq n.$ Then it is clear that $T$ preserves Birkhoff-James orthogonality at $A$ but $T$ is not a scalar multiple of an isometry. This contradicts the hypothesis that $A$ is a $\mathcal K$-set. Therefore, $dim~span~A=n.$
	\end{proof}
	
	We next obtain a complete characterization of the local preservation of Birkhoff-James orthogonality by an operator on Euclidean spaces.
	
	\begin{prop}\label{evt}
		Let $\mathbb{H}$ be the $n$-dimensional Hilbert space $\ell_2^n.$ Then  $T\in \mathbb L(\mathbb{H})$ preserves  Birkhoff-James orthogonality at a non-zero $x\in\mathbb{H}$ if and only if $x$ is an eigenvector of $T^*T.$
	\end{prop}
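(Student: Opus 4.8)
The plan is to exploit the special feature of Hilbert space announced in the introduction, namely that Birkhoff-James orthogonality $\perp_B$ coincides with the usual inner product orthogonality $\perp$. Thus the hypothesis ``$T$ preserves $\perp_B$ at $x$'' translates, via the identity $\langle Tx, Ty\rangle = \langle T^*Tx, y\rangle$, into a statement purely about the vector $T^*Tx$. Concretely, I would first record that for a non-zero $x$, the condition becomes: for every $y \in \mathbb{H}$ with $\langle x, y\rangle = 0$ we have $\langle T^*Tx, y\rangle = 0$. In other words, $T^*Tx$ is orthogonal to the hyperplane $x^{\perp}$.

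For the necessity direction, the key observation is that in the finite-dimensional Hilbert space $\mathbb{H} = \ell_2^n$ one has $(x^{\perp})^{\perp} = \mathrm{span}\{x\}$. Since the previous step shows $T^*Tx \in (x^{\perp})^{\perp}$, it follows immediately that $T^*Tx = \lambda x$ for some scalar $\lambda$, which is exactly the assertion that $x$ is an eigenvector of the positive operator $T^*T$. I would present this as the crux of the argument, as everything else is formal.

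For the sufficiency direction, I would simply reverse the computation: assuming $T^*Tx = \lambda x$, take any $y$ with $x \perp_B y$, equivalently $\langle x, y\rangle = 0$, and compute
\[
\langle Tx, Ty\rangle = \langle T^*Tx, y\rangle = \lambda \langle x, y\rangle = 0,
\]
so that $Tx \perp y'$ whenever... more precisely $Tx \perp Ty$, which in the Hilbert space setting is the same as $Tx \perp_B Ty$. This establishes that $T$ preserves Birkhoff-James orthogonality at $x$.

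I do not anticipate a genuine obstacle here: the entire content of the proposition is the translation of $\perp_B$ into $\perp$ together with the self-duality relation $(x^{\perp})^{\perp} = \mathrm{span}\{x\}$, which is where finite-dimensionality (or, more generally, the Hilbert space structure) is used. The only point requiring a little care is to verify that the orthogonality appearing on the image side, $\langle Tx, Ty\rangle = 0$, is indeed equivalent to $Tx \perp_B Ty$; but this is again immediate from the coincidence of $\perp_B$ and $\perp$ in $\mathbb{H}$, so both directions follow by the same short computation read forwards and backwards.
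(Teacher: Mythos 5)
Your proposal is correct and takes essentially the same route as the paper: both translate the hypothesis, via the coincidence of $\perp_B$ with inner-product orthogonality and the identity $\langle Tx,Ty\rangle=\langle T^*Tx,y\rangle$, into the statement that $T^*Tx$ is orthogonal to the hyperplane $x^{\perp}$. The only (cosmetic) difference is in the final step of necessity: the paper decomposes an arbitrary $y=\alpha x+h$ with $h\in\ker f=x^{\perp}$ and identifies the eigenvalue explicitly as $\|Tx\|^2$, whereas you conclude directly from $(x^{\perp})^{\perp}=\mathrm{span}\{x\}$ that $T^*Tx=\lambda x$.
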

	\begin{proof}
		First we prove the sufficient part. Let $x$ be an eigenvector of $T^*T$. Then $T^*Tx=\lambda x$ for some $\lambda \in\mathbb{R}.$ Now, for any $y\in\mathbb{H},$ $\langle Tx,Ty\rangle=\langle T^*Tx,y\rangle=\langle \lambda x,y\rangle=\lambda \langle x,y\rangle.$ Thus for any $y\in\mathbb{H},$ $\langle x,y\rangle=0\implies\langle Tx,Ty\rangle=0.$ Therefore, $T$ preserves Birkhoff-James orthogonality at $x.$\\
		Next, we prove the necessary part. Suppose that $T\in\mathbb L(\mathbb{H})$ preserves Birkhoff-James orthogonality at a non-zero $x\in\mathbb{H}.$ Since $ \mathbb{H}$ is smooth, there exists a unique support functional $f$ at $x.$ So $\mathbb H=span~\{x\}\oplus ker(f)$. Hence for each element $y\in\mathbb{H},$ there exists a unique scalar $\alpha$ and a unique element $h\in ker(f)$ such that $y = \alpha x + h$. Since  $T$ preserves orthogonality at $x$ and $\langle x, h\rangle=0,$ it follows that $\langle Tx, Th\rangle =0.$
		Now,
		$\langle Ty, Tx\rangle = \langle T(\alpha x + h), Tx\rangle=\alpha \|Tx\|^2
		=\|Tx\|^2(\langle \alpha x, x\rangle + \langle h, x\rangle)= \|Tx\|^2 \langle \alpha x + h, x\rangle = \|Tx\|^2 \langle y,x\rangle.$
		Hence $\langle Tx, Ty\rangle = \|Tx\|^2 \langle x,y\rangle$ for all $y\in\mathbb{H},$ i.e., $\langle T^*Tx- \|Tx\|^2x, y\rangle =0$ for all $y\in\mathbb{H}.$ Therefore, $x$ is an eigenvector of $T^*T$ corresponding to the eigenvalue $\|Tx\|^2.$ This completes the proof of the proposition.
	\end{proof}

We also require the following fact, which is stated without any proof, since it follows rather easily from elementary linear algebra.

	\begin{prop}\label{smoi}
		Let $\mathbb{X}$ be an $n$-dimensional Banach space and let $T\in \mathbb L(\mathbb{X}).$  If every element in $\mathbb{X}$ is an eigenvector of $T$ then $T$ is a scalar multiple of the identity operator.  
	\end{prop}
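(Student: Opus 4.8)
The plan is to reduce everything to showing that a single set of eigenvalues coincides. First I would fix an ordered basis $\mathcal{B} = \{e_1, e_2, \dots, e_n\}$ of $\mathbb{X}$. By hypothesis each $e_i$ is an eigenvector of $T$, so there exist scalars $\lambda_1, \lambda_2, \dots, \lambda_n$ with $Te_i = \lambda_i e_i$ for every $i$. If I can establish that $\lambda_1 = \lambda_2 = \cdots = \lambda_n =: \lambda$, then for an arbitrary $x \in \mathbb{X}$ with $[x]_{\mathcal{B}} = (x_1, \dots, x_n)^t$, linearity immediately gives $Tx = \sum_{i=1}^n x_i Te_i = \lambda \sum_{i=1}^n x_i e_i = \lambda x$, whence $T = \lambda I_n$, as required.

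The crux is therefore the claim that the $\lambda_i$ are all equal. To prove it, I would fix any pair of indices $i \neq j$ and consider the vector $e_i + e_j$, which is nonzero since $e_i$ and $e_j$ are linearly independent. By hypothesis $e_i + e_j$ is again an eigenvector, so $T(e_i + e_j) = \mu (e_i + e_j)$ for some scalar $\mu$. On the other hand, $T(e_i + e_j) = \lambda_i e_i + \lambda_j e_j$. Comparing these two expressions and invoking the linear independence of $e_i$ and $e_j$ forces $\lambda_i = \mu = \lambda_j$. Since $i$ and $j$ were arbitrary, all the eigenvalues coincide, completing the reduction above.

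I do not anticipate any genuine obstacle here, consistent with the author's remark that the result follows from elementary linear algebra. The only point requiring a moment of care is the implicit convention that \emph{every} nonzero element of $\mathbb{X}$ is an eigenvector of $T$; this is precisely what licenses treating $e_i + e_j$ as an eigenvector in the key step. Note also that the argument makes no use of the norm structure of $\mathbb{X}$ and goes through verbatim for any vector space over an arbitrary field.
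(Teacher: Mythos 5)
Your proof is correct and complete: the paper itself omits the proof, remarking only that the result ``follows rather easily from elementary linear algebra,'' and your argument (eigenvalues $\lambda_i$ on a basis, then forcing $\lambda_i=\lambda_j$ via the eigenvector $e_i+e_j$ and linear independence) is precisely the standard argument being alluded to. Your closing remark about the convention on the zero vector and the fact that no norm structure is needed is an accurate and appropriate observation.
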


We are now ready to completely characterize the minimal $\mathcal K$-sets in Euclidean spaces, thereby obtaining the optimal version of the Blanco-Koldobsky-Turnsek Theorem for such spaces.

	\begin{theorem}
		Let $\mathbb{H}$ be the  n-dimensional Hilbert space $\ell_2^n.$ A nonempty set $A\subset S_{\mathbb{H}}$ is a minimal $\mathcal K$-set if and only if $A$  satisfies the  following two conditions:
		\begin{itemize}
			\item[(i)] $A=\{x_1,x_2,\dots,x_n\},$ where $x_1,x_2,\dots,x_n$ are linearly independent.
			\item[(ii)] If $A=A_1\cup A_2,$ where $A_1\neq\phi, $ $A_2\neq\phi,$ then $A_1 \not \perp_B A_2.$
		\end{itemize}
	\end{theorem}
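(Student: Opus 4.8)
The plan is to convert the entire statement into linear algebra on $\mathbb{H}$ using Proposition \ref{evt}, and then reduce the (minimal) $\mathcal K$-set condition to a purely combinatorial property of an auxiliary orthogonality graph. First I would record the operator-theoretic reformulation: by Proposition \ref{evt}, $T$ preserves Birkhoff--James orthogonality at each point of $A$ exactly when every $x\in A$ is an eigenvector of the self-adjoint operator $S:=T^{*}T$; and $T$ is a scalar multiple of an isometry precisely when $T^{*}T=\lambda I_n$ for some $\lambda\ge 0$. Since every self-adjoint positive operator is realized as $T^{*}T$ (take $T=S^{1/2}$), it follows that \emph{$A$ is a $\mathcal K$-set if and only if the only self-adjoint operators having every element of $A$ as an eigenvector are the scalar multiples of $I_n$}. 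I would then introduce the graph $G_A$ on vertex set $A$ with $x\sim y$ iff $\langle x,y\rangle\neq 0$, and observe that condition (ii) is equivalent to connectivity of $G_A$, since a splitting $A=A_1\cup A_2$ with $A_1\perp_B A_2$ is exactly a separation of $G_A$.

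The next step is a clean characterization: \emph{$A$ is a $\mathcal K$-set if and only if $\mathrm{span}\,A=\mathbb H$ and $G_A$ is connected.} For the sufficiency, take a self-adjoint $S$ having all of $A$ as eigenvectors, say $Sx_i=\mu_i x_i$. If $x_i\sim x_j$, symmetry of $S$ gives $\mu_i\langle x_i,x_j\rangle=\langle Sx_i,x_j\rangle=\langle x_i,Sx_j\rangle=\mu_j\langle x_i,x_j\rangle$, whence $\mu_i=\mu_j$; connectivity forces a common eigenvalue $\mu$ on all of $A$, and since $A$ spans, $S=\mu I_n$. For the necessity, spanning is Proposition \ref{dokn}, while if $G_A$ is disconnected I write $A=A_1\sqcup A_2$ with $\mathrm{span}\,A_1\perp\mathrm{span}\,A_2$ and set $S=2I_n-P$, where $P$ is the orthogonal projection onto $\mathrm{span}\,A_1$; this self-adjoint, non-scalar operator has every element of $A$ as an eigenvector (eigenvalue $1$ on $A_1$, eigenvalue $2$ on $A_2$), so $T=S^{1/2}$ witnesses that $A$ is not a $\mathcal K$-set.

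The decisive geometric ingredient is the following: \emph{if $\mathrm{span}\,A=\mathbb H$ and $G_A$ is connected, then $A$ contains a basis $B$ of $\mathbb H$ with $G_B$ connected.} I would take $B\subseteq A$ maximal with respect to being linearly independent and having $G_B$ connected (singletons qualify). If $W:=\mathrm{span}\,B\neq\mathbb H$, then maximality forbids adjoining any vertex: any $A$-neighbour $v$ of $B$ must lie in $W$, for otherwise $B\cup\{v\}$ would still be independent and connected. Equivalently, every $v\in A\setminus W$ is orthogonal to all of $B$, hence $v\perp W$, so $A\setminus W\subseteq W^{\perp}$. But then there is no edge of $G_A$ between the nonempty sets $A\cap W\supseteq B$ and $A\setminus W$, contradicting connectivity. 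Hence $W=\mathbb H$ and $B$ is the desired connected basis.

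Finally I would assemble everything. If (i) and (ii) hold, the characterization shows $A$ is a $\mathcal K$-set; any proper subset has fewer than $n$ vectors, cannot span $\mathbb H$, and so fails to be a $\mathcal K$-set by Proposition \ref{dokn}; thus $A$ is minimal. Conversely, a minimal $\mathcal K$-set is in particular a $\mathcal K$-set, hence spans and has $G_A$ connected, giving (ii); by the geometric lemma it contains a connected basis $B$, which is again a $\mathcal K$-set, so minimality forces $A=B$, a set of $n$ linearly independent vectors, giving (i). I expect the main obstacle to be the geometric lemma, precisely the observation that maximality of a connected independent set pushes all vertices outside its span into the orthogonal complement of that span, thereby contradicting connectivity unless the span is all of $\mathbb H$; this single point is what rules out minimal $\mathcal K$-sets strictly larger than a basis and simultaneously disposes of any possibly infinite $A$.
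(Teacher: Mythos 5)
Your proof is correct, and it rests on the same essential ingredients as the paper's: Proposition \ref{evt}, the orthogonality of distinct eigenspaces of the self-adjoint operator $T^{*}T$, an orthogonal-projection counterexample for sets that split orthogonally, and Proposition \ref{dokn}. What differs is the decomposition. The paper proves exactly the two implications of the theorem: sufficiency by the eigenspace argument, and necessity by first ruling out orthogonal splittings (taking $T$ to be the projection onto $\mathrm{span}\,A_1$) and then greedily building a chain $B_1\subset B_2\subset\cdots\subset B_n\subset A$ of linearly independent sets, each new vector non-orthogonal to the previous ones, whose existence contradicts minimality when $|A|>n$. You instead factor everything through an intermediate statement the paper never isolates: $A$ is a $\mathcal{K}$-set if and only if $\mathrm{span}\,A=\mathbb{H}$ and the non-orthogonality graph $G_A$ is connected, valid for sets of arbitrary (even infinite) cardinality, followed by a separate extraction lemma that a maximal connected linearly independent subset of a connected spanning set must itself span. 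Your maximality argument and the paper's greedy induction are two executions of the same idea --- both hinge on the observation that orthogonality to every element of $B$ forces orthogonality to $\mathrm{span}\,B$ --- so the gain is in modularity rather than mechanics: you obtain a reusable characterization of \emph{all} $\mathcal{K}$-sets of $\ell_2^n$, from which the minimal case drops out as a corollary. One small point to tighten: your blanket claim that $A$ is a $\mathcal{K}$-set iff the only self-adjoint operators having every element of $A$ as an eigenvector are scalar requires a shift $S\mapsto S+cI$ before taking the square root when $S$ is not positive; in your actual uses of the claim the operators involved are positive, so nothing breaks, but the equivalence as stated needs that extra line.
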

	\begin{proof}
		We first prove the sufficient part. Suppose that $ A \subset S_{\mathbb{H}} $ satisfies (i) and (ii). Let $T\in \mathbb L(\mathbb{H})$ be such that $T$ preserves Birkhoff-James orthogonality at $A$.  Therefore, it follows from Proposition \ref{evt}  that each element of $A$ is an eigenvector of $T^*T.$ Since any two eigen spaces of $T^*T$ are orthogonal, it follows from (ii) that all the vectors of $A$ must belong to the same eigen space of $T^*T.$ Since by (i), $dim~span~A=n,$ it follows that all the vectors of $\mathbb{H}$ are eigenvectors of $T^*T.$ Applying Proposition \ref{smoi}, we conclude that $T^*T= \lambda I_n,$ for some scalar $ \lambda, $ i.e., $T$ is a scalar multiple of an isometry. As $T \in \mathbb L(\mathbb{H}) $ is chosen arbitrarily, $A$ is a $\mathcal K$-set for $ \mathbb{H}. $ On the other hand, it follows from Proposition \ref{dokn} that no proper subset of $A$ is a $\mathcal K$-set for $ \mathbb{H}. $ Therefore, $A$ is a minimal $\mathcal K$-set.\\
		Next, we prove the necessary part. Let $A\subset S_{\mathbb{H}}$  be a minimal $\mathcal K$-set.  We first show that (ii) holds. Suppose on the contrary that there exist nonempty $A_1,A_2\subset A$ with $A=A_1\cup A_2$, such that $A_1 \perp_B A_2.$ Let $T$ be the orthogonal projection of $\mathbb{H}$ onto $span~A_1.$  Then $M_T=S_{span~A_1}$ and $T(A_2)=\{0\}$. It is easy to check that $T$ preserves orthogonality on ${A}$, but $T$ is not a scalar multiple of an isometry. This contradicts the hypothesis that $A$ is a $\mathcal K$-set. Thus (ii) holds. We next show that (i) holds. It follows from Proposition \ref{dokn} that $dim~span~A=n,$ and therefore, it is clear that $|A| \geq n.$ If possible, suppose that $|A|>n.$  Consider any non-zero $x_1 \in A.$ Let $B_1=\{x_1\}.$ Clearly, $A=(span~B_1\cap A) \cup (A\setminus span~B_1).$ Since (ii) holds, there exists $x_2\in A \setminus span~B_1 $ such that $ B_1\not\perp\{ x_2\}. $ Let $B_2=\{x_1,x_2\}.$ Again A  can be expressed as  $A=(span~B_2\cap A) \cup (A\setminus span~B_2 ) $ and so there exists $x_3\in A \setminus span~B_2 $ such that $ B_2\not\perp\{ x_3\}. $ Let $B_3=\{x_1,x_2,x_3\}.$ Proceeding in this way we can find a set $ B_n=\{x_1,x_2,...,x_n\} \subset A $ such  that $B_n$ satisfies (i) and (ii). Therefore, it follows from the sufficient part that $ B_n $ is a minimal $\mathcal K$-set, contradicting the minimality of $ A. $ This proves that $|A|=n.$ Thus (i) holds and this completes the proof of the theorem.
	\end{proof}

\section*{Section-II}

In this section, we consider local preservation of Birkhoff-James orthogonality on polyhedral Banach spaces and obtain some interesting consequences of such a study. Our main observation is that preservation of orthogonality by an operator at a unit vector imposes a certain restriction on the order of smoothness of the image vector. The proof of this fact relies on a particular property of linear maps on a finite-dimensional vector space. In order to proceed in this direction, we require the following two technical lemmas.
 
 \begin{lemma}\label{lem1}
   	Let $A=\big({a}_{ij}\big)_{n\times n}$be non-singular, where $a_{ij}\in\mathbb{R}, \forall \,1 \leq i,j \leq n.$   Let $ k_i,h_i \in \mathbb{R}, 1\leq i\leq n,$ with $1+ \sum_{i=1}^{n} k_ih_i \neq 0.$  Suppose that $B=\big({b}_{ij}\big)_{n\times n+1},$ where  $b_{ij}=a_{ij}, $ for $1\leq i,j\leq n$ and $b_{i n+1}=\sum_{j=1}^{n} h_ja_{i j},$ for $1\leq i\leq n$.  Then for a given $y\in\mathbb{R}^n,$  the equation $ Bx=y$ is solvable in $x$  with  $x= (x_1,\dots,x_n,\sum_{i=1}^{n} k_ix_i)^t\in\mathbb{R}^{n+1}.$

   	%	Then there exists a solution of the form $x= (x_1,\dots,x_n,\sum_{i=1}^{n} k_ix_i)^t\in\mathbb{R}^{n+1}$ of the equation $Bx=y$ for any $y\in\mathbb{R}^{n}$, where $B=\Big({b}_{ij}\Big)_{n\times n+1}$, $b_{ij}=a_{ij} $ for $1\leq i,j\leq n$ and $b_{i n+1}=\sum_{j=1}^{n} h_ja_{i j}$ for $1\leq i\leq n$.
   \end{lemma}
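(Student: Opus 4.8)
The plan is to exploit the special block structure of $B$ together with the prescribed shape of the unknown vector $x$, which collapses the system into an invertible $n \times n$ system governed by a rank-one perturbation of the identity. Write $u = (x_1, \dots, x_n)^t \in \mathbb{R}^n$, and set $h = (h_1, \dots, h_n)^t$ and $k = (k_1, \dots, k_n)^t$. The defining relation $b_{i,n+1} = \sum_{j=1}^n h_j a_{ij}$ says precisely that the $(n+1)$-th column of $B$ is $Ah$, so that $B$ is the $n \times (n+1)$ matrix obtained by appending the column $Ah$ to $A$. Imposing the required form $x_{n+1} = \sum_{i=1}^n k_i x_i = k^t u$ on the last coordinate, the product $Bx$ becomes $Au + (Ah)(k^t u) = A(I_n + h k^t)u$, where $I_n + h k^t$ is a rank-one update of the identity. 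This is the main structural observation.

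Consequently, solving $Bx = y$ with $x = (x_1, \dots, x_n, \sum_{i=1}^n k_i x_i)^t$ is equivalent to finding $u \in \mathbb{R}^n$ with $A(I_n + h k^t)u = y$; once such a $u$ is produced, the vector $x = (u^t, k^t u)^t$ is the desired solution. Since $A$ is non-singular by hypothesis, this is in turn equivalent to $(I_n + h k^t)u = A^{-1}y$, and the entire matter reduces to verifying that the $n \times n$ matrix $I_n + h k^t$ is invertible.

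This is exactly the step where the hypothesis $1 + \sum_{i=1}^n k_i h_i \neq 0$ is used, and I would establish invertibility by a direct kernel computation: if $(I_n + h k^t)v = 0$, then $v = -(k^t v)h$, and substituting this back yields $(k^t v)\big(1 + k^t h\big) = 0$. Since $1 + k^t h = 1 + \sum_{i=1}^n k_i h_i \neq 0$, we conclude $k^t v = 0$ and hence $v = 0$, so $I_n + h k^t$ has trivial kernel and is invertible. (Equivalently, one may invoke the matrix determinant lemma $\det(I_n + h k^t) = 1 + k^t h$, or write the inverse explicitly via the Sherman--Morrison formula $(I_n + h k^t)^{-1} = I_n - (1 + k^t h)^{-1} h k^t$.) It then follows that $u = (I_n + h k^t)^{-1}A^{-1}y$ exists, and the proof is complete. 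I do not anticipate any genuine obstacle beyond correctly identifying the rank-one structure of $I_n + h k^t$; once $Bx$ has been rewritten as $A(I_n + h k^t)u$, the condition $1 + \sum_{i=1}^n k_i h_i \neq 0$ is visibly the precise obstruction to invertibility, and the argument is forced.
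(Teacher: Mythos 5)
Your proposal is correct and is essentially the paper's own argument: the paper's reduced matrix $C$ with entries $d_{ij}=a_{ij}+k_j\sum_{l=1}^{n}h_la_{il}$ is precisely your $A(I_n+hk^t)$, and both proofs reduce $Bx=y$ (for $x$ of the prescribed form) to this $n\times n$ system and use $1+\sum_{i=1}^{n}k_ih_i\neq 0$ to invert it. The only cosmetic difference is how invertibility is justified: the paper factors $\det(C)=\det(A)\cdot\det(I_n+hk^t)=\det(A)\bigl(1+\sum_{i=1}^{n}k_ih_i\bigr)$ (the matrix determinant lemma you cite as an alternative), while you give a direct kernel argument for $I_n+hk^t$.
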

   \begin{proof}
   	For $x = (x_1,\dots,x_n,\sum_{i=1}^{n} k_ix_i)^t$,
   	\begin{eqnarray*} 
   		Bx &=&
   		\begin{pmatrix}
   			\sum_{i=1}^{n}a_{1i}x_i+ \sum_{j=1}^{n} h_ja_{1 j} \sum_{i=1}^{n} k_ix_i \\ 
   			\sum_{i=1}^{n}a_{2i}x_i+ \sum_{j=1}^{n} h_ja_{2 j}  \sum_{i=1}^{n} k_ix_i   \\  
   			\vdots \\
   			\sum_{i=1}^{n}a_{ni}x_i+ \sum_{j=1}^{n} h_ja_{n j}  \sum_{i=1}^{n} k_ix_i
   		\end{pmatrix} \\
   		&=& C\tilde{x},
   	\end{eqnarray*}\\
   	where $ C=\big({d}_{ij}\big)_{n\times n}, \, d_{ij}=a_{ij}+k_j\sum_{l=1}^{n} h_la_{il},  \forall \, 1 \leq i,j \leq n$ 
   	and $  \tilde x =(x_1,x_2,\dots,x_n)^t.$\\
   	Now,\begin{eqnarray*}
   		det(C)
   		&=& det(A)\cdot
   		\begin{vmatrix}
   			1+k_1h_1 & k_2h_1  & \dots & k_{n}h_1  \\ 
   			k_1h_2   & 1+k_2h_2 & \dots & k_{n}h_2   \\  
   			\vdots 	& \vdots  & \ddots & \vdots \\
   			k_1h_n  & k_2h_n   & \dots   &1+ k_nh_n
   		\end{vmatrix} \\
   		&=& det(A)\cdot(1+k_1h_1+k_2h_2+\dots+k_{n}h_{n}) \\
   		&\not =&  0.
   	\end{eqnarray*}
   	Therefore, there exists a solution $(x_1,x_2,...,x_n)\in\mathbb{R}^{n}$ of the equation
   	$C\tilde{x}=y.$
   	Hence $x=(x_1,x_2,\dots,x_n,\sum_{i=1}^{n}k_ix_i)^t\in\mathbb{R}^{n+1}$ is a solution of the equation 
   	$Bx=y.$
   \end{proof}

   \begin{lemma}\label{lem2}
   	Let $ \mathbb{X} $ be an $n$-dimensional linear space and let $\{e_1,e_2, \ldots e_n \}$ be a basis of $\mathbb{X}.$ Let $ \{f_1, f_2,\dots, f_k \}\subset\mathbb{\mathbb{X^*}}$ be a set of $k (\leq n)$ linearly independent functionals. Consider the set $F=  co(\{f_1,f_2, \ldots,f_k\}). $ Suppose that $m_1,m_2,\dots,m_p\in\{1,2,\dots,n\},$ where $1\leq p<k.$ Then there exist $r\in\{1,2,\dots,n\} \backslash \{m_1,m_2,\dots,m_p\}$ and $\tilde{f}_1,\tilde{f}_2,\dots,\tilde{f}_k\in F$ such that for each  $1\leq i\leq k$ and for each  $x= \sum_{t=1}^{n} c_te_t, c_t \in \mathbb{R}$,
   	$$ \tilde{f}_i(x) =\sum_{j=1}^{n} b_{ij}c_j,$$ 
   	where  $b_{ij}\in\mathbb{R},$   $b_{ir}\neq 0,$ $\tilde{b}_{l}=(b_{1l}, b_{2l} \dots, b_{kl})^t $ for $l\in\{r,m_1,m_2,\dots,m_p\},$ and $\tilde{b}_r\notin span~ \big\{\tilde{b}_{m_1},\tilde{b}_{m_2},\dots,\tilde{b}_{m_p}\big\}.$  
   	% where  for each $i\in\{1,2,\dots,k\}$, $ f_i(x) =\sum_{i=1}^{n} a_{ij}x_j$, where $a_{ij}\in\mathbb{R}$ , for $1\leq i\leq k$,  $1\leq j\leq n$ and       
   \end{lemma}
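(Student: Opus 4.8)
The plan is to translate the statement into the language of matrices acting by convex combinations of rows, isolate a single column of the original functional matrix that is linearly independent from the prescribed columns, and then render that column entrywise nonzero by an invertible convex mixing of the functionals. Concretely, I would first fix coordinates via the given basis: writing $a_{ij} = f_i(e_j)$, the hypothesis that $f_1,\ldots,f_k$ are linearly independent says precisely that the $k \times n$ matrix $A = (a_{ij})$ has rank $k$. Every $\tilde f_i \in F = co(\{f_1,\ldots,f_k\})$ is a convex combination $\tilde f_i = \sum_{s=1}^{k} \lambda_{is} f_s$ with $\lambda_{is} \geq 0$ and $\sum_s \lambda_{is} = 1$, so the coefficient matrix $B = (b_{ij})$ of $\tilde f_1,\ldots,\tilde f_k$ is exactly $B = \Lambda A$, where $\Lambda = (\lambda_{is})$ is an arbitrary $k \times k$ row-stochastic matrix. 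Denoting by $a_l = (a_{1l},\ldots,a_{kl})^t$ the $l$-th column of $A$, the $l$-th column of $B$ is $\tilde b_l = \Lambda a_l$, and the lemma reduces to finding $r \notin \{m_1,\ldots,m_p\}$ and a row-stochastic $\Lambda$ such that every entry of $\Lambda a_r$ is nonzero and $\Lambda a_r \notin span\{\Lambda a_{m_1},\ldots,\Lambda a_{m_p}\}$.

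Next I would locate $r$ by a rank count, independently of $\Lambda$. Since $A$ has rank $k$, its columns span $\mathbb{R}^k$, whereas the $p$ columns $a_{m_1},\ldots,a_{m_p}$ span a subspace $V$ of dimension at most $p < k$. Hence at least one column $a_r$ with $r \notin \{m_1,\ldots,m_p\}$ satisfies $a_r \notin V$; in particular $a_r \neq 0$. The point of this choice is that the span condition becomes automatic once $\Lambda$ is invertible: an invertible $\Lambda$ carries $V$ onto $span\{\Lambda a_{m_1},\ldots,\Lambda a_{m_p}\}$ and sends $a_r \notin V$ to a vector outside that image. Thus the whole problem collapses to producing an invertible row-stochastic $\Lambda$ for which $\Lambda a_r$ has no zero coordinate.

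The heart of the matter, and the step I expect to be the main obstacle, is this last reconciliation: forcing all $k$ coordinates of $\Lambda a_r$ to be nonzero simultaneously with $\det \Lambda \neq 0$, while being permitted only convex rather than arbitrary linear combinations of the $f_s$. I would settle it by a genericity argument on the polytope $P$ of $k \times k$ row-stochastic matrices, which has positive dimension $k(k-1)$ because $k \geq p+1 \geq 2$. The maps $\Lambda \mapsto \det \Lambda$ and $\Lambda \mapsto (\Lambda a_r)_i$ for $1 \leq i \leq k$ are polynomials on $P$, and none vanishes identically: the identity matrix lies in $P$ with determinant $1$, and for each $i$ the value $(\Lambda a_r)_i$ equals $(a_r)_s \neq 0$ if row $i$ of $\Lambda$ is taken to be a vertex $e_s$ with $(a_r)_s \neq 0$ (such $s$ exists since $a_r \neq 0$). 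A nonzero polynomial vanishes only on a set of measure zero in $P$, so the finite union of these $k+1$ exceptional sets still misses almost every point of $P$. Selecting any $\Lambda$ in the complement gives an invertible row-stochastic matrix with $\Lambda a_r$ entrywise nonzero; setting $\tilde f_i = \sum_s \lambda_{is} f_s \in F$ and reading off $b_{ij} = (\Lambda A)_{ij}$ then yields all the asserted conclusions. As a more constructive alternative to the measure-theoretic step, one could build the rows of $\Lambda$ one at a time, at each stage choosing a probability vector avoiding both the hyperplane $\{\,v : \langle v, a_r\rangle = 0\,\}$ and the span of the rows already chosen; I would nonetheless keep the genericity version as the cleaner route.
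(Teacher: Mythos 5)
Your proposal is correct, and its first half coincides with the paper's own proof: both pass to the coefficient matrix $A=(a_{ij})_{k\times n}$ of rank $k$, and both choose $r\notin\{m_1,\dots,m_p\}$ by the same column-span count, so that the $r$-th column $a_r$ lies outside the span of the columns $a_{m_1},\dots,a_{m_p}$ (hence $a_r\neq 0$). Where you genuinely diverge is in the step you yourself flag as the heart of the matter: producing the convex mixing that removes the zero entries in column $r$. The paper does this by an explicit, purely algebraic construction: after reordering, let $f_1,\dots,f_q$ be the functionals with $a_{ir}\neq 0$, and set $\tilde f_i=f_i$ for $i\leq q$ and $\tilde f_i=\frac{1}{2}(f_i+f_1)$ for $i>q$; then $b_{ir}=\frac{1}{2}a_{1r}\neq 0$ for the previously bad rows, and the span condition is checked by hand, showing that a relation $\sum_j\gamma_j\tilde b_{m_j}=\tilde b_r$ forces the same relation among the columns of $A$, a contradiction. (That verification is exactly the invertibility of the implicit mixing matrix, though the paper never phrases it that way.) You instead take a generic row-stochastic $\Lambda$: since $\det\Lambda$ and the coordinates of $\Lambda a_r$ are polynomials not identically zero on the polytope of row-stochastic matrices (witnessed by the identity matrix and by vertex rows $e_s$ with $(a_r)_s\neq 0$), almost every $\Lambda$ is invertible and makes $\Lambda a_r$ entrywise nonzero, and invertibility renders the span condition automatic. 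Both arguments are sound. Yours is conceptually cleaner, since it isolates invertibility as the real reason the span condition survives the mixing, and it generalizes easily to further open conditions; the paper's buys explicitness and elementarity, writing the $\tilde f_i$ down concretely with weight $\frac{1}{2}$ and invoking nothing beyond matrix algebra, in keeping with the tone of the adjacent Lemma \ref{lem1}. One point of care in your genericity step: the measure-zero claim must be read relative to the affine hull of the polytope (matrices with row sums one), which is legitimate here because your non-vanishing witnesses lie in the polytope itself and the polytope has nonempty interior in that affine hull; your constructive row-by-row alternative sidesteps even this quibble.
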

   \begin{proof}
   	Let $ x = \sum_{t=1}^{n} c_te_t,$ where $  c_t \in \mathbb{R}.$  For each $1\leq i\leq k,$ we can write $f_i(x) = \sum_{j=1}^n a_{ij} c_j, $ where $a_{ij} \in \mathbb{R}.$ Since $f_1, f_2,\dots, f_k$ are linearly independent, it follows that  $A=\big({a}_{ij}\big)_{k\times n}$ is of rank $k$. So $A$ has $k$ linearly independent columns. Let $  m_1,m_2,\dots,m_p \in \{1,2,\ldots,n\}$ and  $p<k.$ Then there exists $r \in \{1,2,\dots,n\} \backslash \{m_1,m_2,\dots,m_p\}$ such that $\tilde{a}_r\notin span~\big\{\tilde{a}_{m_1},\tilde{a}_{m_2},\dots,\tilde{a}_{m_p}\big\},$ where
    $\tilde{a}_{l}=(a_{1l}, a_{2l} \dots, a_{kl})^t $  for $l\in\{r,m_1,m_2,\dots,m_p\}.$
   	Clearly, $\tilde{a}_r\neq 0$, i.e., $(a_{1r}, a_{2r} \dots, a_{kr})^t\neq (0,0,\dots,0)^t.$
   	 Consider the sets
   	\begin{eqnarray*}
   		A_1&=&\{f_i:a_{ir}\neq 0, 1\leq i\leq n\} \\ 
   		\text{and } A_2&=&\{f_i:a_{ir}=0,  1\leq i\leq n\}
   	\end{eqnarray*}
   	Since  $(a_{1r}, a_{2r} \dots, a_{kr})^t\neq (0,0,\dots,0)^t,$ it follows that there exists $s\in\{ 1,2,\dots,k\}$ such that $a_{sr}\neq0$. So $f_s\in A_1$. Hence $A_1$ is nonempty. Without loss of generality  assume that  $f_1, f_2,\dots, f_q \in A_1 $ and $f_{q+1}, f_{q+2},\dots, f_k \in A_2,$ for some $1<q\leq k$.
   	Let  \begin{eqnarray*}
   			\tilde{f}_i&=&f_i, \quad \quad \,\,\text{ if } 1\leq i\leq q \\
   			&=&\frac{f_i+f_1}{2},\text{ if } q<i\leq k .
   		\end{eqnarray*}
   	Clearly, $\tilde{f}_i\in F,$ for all $1\leq i \leq k.$
   	For each $1 \leq i\leq k,$  let $ \tilde{f}_i(x) =\sum_{i=1}^{n} b_{ij}c_j .$ Then it is easy to see that
   	\begin{eqnarray*}b_{ij}&=&a_{ij},\quad\quad\,\,\,\,\text{ if } 1\leq i\leq q , 1\leq j\leq n\\
   		&=& \frac{a_{ij}+a_{1j}}{2},\text{ if } q<i\leq k,  1\leq j\leq n.
   	\end{eqnarray*}
   	Hence $b_{ir}\neq 0,$ for all $1\leq i\leq k.$  Now, we claim that $\tilde{b}_r\notin span~ \big\{\tilde{b}_{m_1},\tilde{b}_{m_2},\dots,\tilde{b}_{m_p}\big\}.$\\
   	On the contrary to our claim suppose that there exist $\gamma_1,\gamma_2,\dots,\gamma_p\in\mathbb{R}$ such that
   	$\gamma_1\tilde{b}_{m_1}+\gamma_2\tilde{b}_{m_2}+\dots+\gamma_p\tilde{b}_{m_p}=\tilde{b}_r.$ Since 
   	\begin{eqnarray*}b_{ij}&=&a_{ij},\quad\quad\,\,\,\,\text{ if } 1\leq i\leq q , 1\leq j\leq n\\
   		&=& \frac{a_{ij}+a_{1j}}{2},\text{ if } q<i\leq k,  1\leq j\leq n,
   	\end{eqnarray*} it follows that $\gamma_1\tilde{a}_{m_1}+\gamma_2\tilde{a}_{m_2}+\dots+\gamma_p\tilde{a}_{m_p}=\tilde{a}_r.$ This contradicts the fact that $\tilde{a}_r\notin span~\big\{\tilde{a}_{m_1},\tilde{a}_{m_2},\dots,\tilde{a}_{m_p}\big\}$.
   	Thus our claim is established and this completes the proof.
   \end{proof}
   
   We are now ready to state and prove the desired property of linear maps on a finite-dimensional Banach space. Although purely linear algebraic in nature, we present the result in the setting of Banach spaces, since our main objective is to apply it to the theory of $ k $-smoothness.  
   
   \begin{theorem}\label{th1}
   	Let $ \mathbb{X} $ and $ \mathbb{Y} $ be Banach spaces of dimensions $n$ and $m$ respectively. Let $ \{f_1, f_2, ..., f_k \}$ be a set of $k ( k\leq n)$ linearly independent functionals in $\mathbb{X}^*$ and let $ \{g_1, g_2, ..., g_p \}$ be a set of $p ( p<k,\,p\leq m)$ linearly independent functionals in $\mathbb{Y}^*.$ Let $F= co  (\{f_1, f_2, ..., f_k \})$ and 
   	%$$F=\Big \{\sum_{i=1}^{k}\alpha_if_i :\sum_{i=1}^{k}\alpha_i=1, \alpha_i\geq 0, \forall\, i=1,2,\ldots,k \Big \}$$ and 
   	$$G=\Big \{\sum_{i=1}^{p}\beta_ig_i :\sum_{i=1}^{p}\beta_i=1, \beta_i\in \mathbb{R} \,\, \forall\, i=1,2,\ldots,p \Big \}.$$
   	Let $A=\bigcup\limits_{f\in F}ker(f)$ and  $B=\bigcup\limits_{g\in G}ker(g).$ If $T\in\mathbb{L(\mathbb X,\mathbb Y)}$ is such that $T(A)\subset B$ then $T(\mathbb X)\subset B .$
   \end{theorem}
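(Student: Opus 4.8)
The plan is to reduce the statement, via contraposition, to showing that a certain preimage set is empty, and then to reach a contradiction by a dimension count. First I would describe the two sets $A$ and $B$ explicitly through their complements. Because $F$ is the \emph{convex} hull of $f_1,\dots,f_k$, a point $x$ lies in $A$ exactly when $0\in\mathrm{conv}\{f_1(x),\dots,f_k(x)\}$, so
\[
\mathbb X\setminus A=\{x: f_i(x)>0\ \forall i\}\ \cup\ \{x: f_i(x)<0\ \forall i\}=:C_+\cup C_-,
\]
a disjoint union of two open convex cones, interchanged by $x\mapsto -x$. Because $G$ is the \emph{affine} hull of $g_1,\dots,g_p$, a point $y$ lies in $B$ exactly when $0$ belongs to the affine hull of $\{g_1(y),\dots,g_p(y)\}\subset\mathbb R$, which gives
\[
\mathbb Y\setminus B=\{y: g_1(y)=\dots=g_p(y)\neq 0\}.
\]
Since $T(A)\subset B$ is the same as $T^{-1}(\mathbb Y\setminus B)\subset\mathbb X\setminus A$, and $T(\mathbb X)\subset B$ is the same as $T^{-1}(\mathbb Y\setminus B)=\emptyset$, it will suffice to prove that no $x$ satisfies $g_1(Tx)=\dots=g_p(Tx)\neq 0$.

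Next I would argue by contradiction, assuming such an $x$ exists. Writing $h_i:=g_i\circ T\in\mathbb X^*$, I would introduce the subspace $W:=\bigcap_{i=2}^{p}\ker(h_1-h_i)=\{x: h_1(x)=\dots=h_p(x)\}$, whose codimension is at most $p-1$, and observe that $T^{-1}(\mathbb Y\setminus B)=\{x\in W: h_1(x)\neq 0\}$; in particular this set is nonempty and $h_1|_W\neq 0$. The half-space $P_+:=\{x\in W: h_1(x)>0\}$ is convex, hence connected, and is contained in $C_+\cup C_-$; since $C_+,C_-$ are open and disjoint, $P_+$ lies in a single one of them, and by the symmetry $x\mapsto -x$ (which swaps $C_+\leftrightarrow C_-$ and $P_+\leftrightarrow P_-$) I may assume $P_+\subset C_+$.

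The decisive step is to convert this inclusion into proportionality on $W$. For $w\in W$ with $h_1(w)=0$, picking $v\in W$ with $h_1(v)>0$ and letting $t\downarrow 0$ in the inequality $f_i(w+tv)>0$ yields $f_i(w)\geq 0$, and the same applied to $-w$ yields $f_i(w)\leq 0$; hence $f_i(w)=0$. Thus $\ker(h_1|_W)\subset\ker(f_i|_W)$, forcing $f_i|_W=\lambda_i\,h_1|_W$ for scalars $\lambda_i$, with $\lambda_1>0$ since $f_1>0$ on $P_+$. Then each of the $k-1$ functionals $f_i-\tfrac{\lambda_i}{\lambda_1}f_1$ $(2\le i\le k)$ vanishes on $W$; they are linearly independent because $f_1,\dots,f_k$ are, so the annihilator of $W$ in $\mathbb X^*$ has dimension at least $k-1$. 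But that dimension equals $\mathrm{codim}\,W\leq p-1$, and $p<k$ forces $p-1<k-1$, a contradiction. This yields $T^{-1}(\mathbb Y\setminus B)=\emptyset$, i.e.\ $T(\mathbb X)\subset B$.

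The hardest part will be the passage from the geometric inclusion $P_+\subset C_+$ to the algebraic identity $f_i|_W=\lambda_i h_1|_W$; this is exactly where the convexity of $F$ (encoded in the cone structure of $C_\pm$) and the affine structure of $G$ (encoded in the description of $\mathbb Y\setminus B$) are genuinely needed, and where the hypothesis $p<k$ ultimately bites through the dimension count. I note that this route does not invoke Lemmas \ref{lem1} and \ref{lem2}; those appear tailored to a more constructive argument that explicitly produces an element of $A$ whose image escapes $B$, but the contrapositive-plus-dimension-count strategy above should close the proof on its own.
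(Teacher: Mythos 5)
Your proof is correct, and it takes a genuinely different route from the paper's. The paper argues constructively and in coordinates: after fixing bases it considers the matrix $D=CL$ of the functionals $g_i\circ T$, uses Lemma \ref{lem2} to replace $f_1,\dots,f_k$ by functionals $\tilde f_1,\dots,\tilde f_k\in F$ whose coefficient vectors have a suitable non-degenerate column, and uses Lemma \ref{lem1} (a solvability statement for a rank-one-perturbed linear system, proved via a determinant identity) to manufacture, for each $z\in\mathbb X$, an explicit $\eta\in\ker(\tilde f_q)\subset A$ with $g_i(T\eta)=g_i(Tz)$ for all $i$; since membership in $B$ depends only on the tuple $(g_1(y),\dots,g_p(y))$, this forces $Tz\in B$. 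You instead argue by contraposition: your descriptions $\mathbb X\setminus A=C_+\cup C_-$ (two disjoint open convex cones, which is where the convexity of $F$ enters) and $\mathbb Y\setminus B=\{y:g_1(y)=\dots=g_p(y)\neq 0\}$ (which is where the affine structure of $G$ enters) are both correct, and the chain of steps --- $P_+$ convex hence contained in a single cone, the limiting argument $t\downarrow 0$ giving $\ker(h_1|_W)\subset\ker(f_i|_W)$ and hence $f_i|_W=\lambda_i h_1|_W$, and finally the count that the annihilator of $W$ contains $k-1$ independent functionals while having dimension $\mathrm{codim}\,W\leq p-1$ --- is sound and contradicts $p<k$ exactly as claimed. (Your ``without loss of generality'' is harmless: if instead $P_+\subset C_-$, running the identical argument with $P_-$ only changes the sign of $\lambda_1$, and only $\lambda_1\neq 0$ is ever used.) What each approach buys: yours is shorter, coordinate-free, bypasses both technical lemmas (which the paper uses nowhere else), and isolates precisely where the hypothesis $p<k$ bites; the paper's heavier argument is constructive, exhibiting for every $z$ a concrete point of $A$ whose image under $T$ has the same $g$-values as $Tz$, information that a pure dimension count cannot supply.
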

   \begin{proof}
   	Let  $T\in\mathbb{L(\mathbb X,\mathbb Y)}$ be such that $T(A)\subset B$. Suppose that $\mathcal{B}_1$ and $\mathcal{B}_2$ are ordered bases of $ \mathbb{X} $ and $ \mathbb{Y} $ respectively. Let the matrix representation of $T$ relative to $(\mathcal{B}_1,\mathcal{B}_2)$ be
   	\[L=\big({l}_{ij}\big)_{m\times n}.\]
   	For each $x\in \mathbb X$ with $[ x]_{\mathcal{B}_1} =(x_1,x_2,\dots,x_n)^t$ and for each $1\leq i \leq k,$ let
   	$ f_i(x) =\sum_{j=1}^{n} a_{ij}x_j.$   Also, for each $y\in \mathbb Y$ with 
 $[ y]_{\mathcal{B}_2} =(y_1,y_2,\dots,y_m)^t$ and for each $1\leq i\leq p,$ let $ g_i(y) =\sum_{j=1}^{m} c'_{ij}y_j.$  \\
   	Consider the matrix $C=\big({c'}_{ij}\big)_{p\times m}.$ Then for each $x\in \mathbb X,$ 
   	\[CLu=g,\] 
   	where $u=[ x]_{\mathcal{B}_1} =(x_1,x_2,\dots,x_n)^t$ and $g=(g_1(Tx),g_2(Tx),\dots,g_p(Tx))^t.$
   	Consider the matrix $D=CL=\Big({d}_{ij}\Big)_{p\times n}.$
   	Let the rank of $D$ be $r$ for some $0\leq r\leq p$. If $r=0,$ then for each $x\in\mathbb{X}$, $g_i(Tx)=0$ for all $1\leq i\leq p$ and this implies that for each  $x\in\mathbb{X},$ $Tx\in B.$ Hence $T(\mathbb X)\subset B .$
   	Next, consider  $1\leq r \leq p.$ Suppose that $h_1$-{th}, $h_2$-{th}, \dots, $h_r$-{th} rows of $D$ are linearly independent, where $h_1 <h_2<\dots<h_r\in\{1,2,\dots,p\}.$ If $r<p,$ then for each $i\in \{1,2,\dots,p\}\backslash\{h_1,h_2,\dots,h_r\}$ and for each $j\in \{1,2,\dots,n\}$, $d_{ij}=\sum_{t=1}^{r}\lambda_{ih_t}d_{h_tj}$ for some $\lambda_{ih_t}\in \mathbb{R}.$ So for each $x\in \mathbb X$ with $[ x]_{\mathcal{B}_1} =(x_1,x_2,\dots,x_n)^t$ and for each $i\in \{1,2,\dots,p\}\backslash\{h_1,h_2,\dots,h_r\}$   we have
   	\begin{eqnarray*}
   		g_i(Tx)&=& \sum_{j=1}^{n} d_{ij}x_j\\
   		&=& \sum_{j=1}^{n}\big(\sum_{t=1}^{r}\lambda_{ih_t}d_{h_tj}\big)x_j\\
   		&=& \sum_{t=1}^{r}\lambda_{ih_t}\sum_{j=1}^{n}d_{h_tj}x_j\\
   		&=& \sum_{t=1}^{r}\lambda_{ih_t}g_{h_t}(Tx).
   	\end{eqnarray*}
   	Let $z\in\mathbb{X}.$ We show that $Tz \in B.$ Clearly, it is sufficient to show that there exists $\eta\in A$ such that $g_i(T\eta)=g_i(Tz)$ for all $i\in\{1,2,\dots,p\}.$\\
   	Without loss of generality assume that first $r$ columns of $D$ are linearly independent. It follows from Lemma \ref{lem2} that there exist $s\in\{r+1,r+2,\dots,n\}$ and $\tilde{f}_1,\tilde{f}_2,\dots,\tilde{f}_k\in F$ such that for each  $i\in\{1,2,\dots,k\}$ and for each $x\in \mathbb X$ with $[ x]_{\mathcal{B}_1} =(x_1,x_2,\dots,x_n)^t,$  $ \tilde{f}_i(x) =\sum_{j=1}^{n} c_{ij}x_j,$ where $c_{ij}\in\mathbb{R}$ and $c_{is}\neq 0$ and $\tilde{c}_s\notin span~ \big\{\tilde{c}_1,\tilde{c}_2,\dots,\tilde{c}_r \big\},$ where $\tilde{c}_{l}=(c_{1l}, c_{2l},\dots, c_{kl})^t $ for $l\in\{ s,1,2,\dots,r\}.$
   	Since the first $r$ columns of $D$ are linearly independent and $s\in\{r+1,r+2,\dots,n\},$ it follows that for each $i\in\{1,2,\dots,p\},$ $d_{is}=\sum_{j=1}^{r}\delta_jd_{ij},$ where $\delta_j\in\mathbb R.$
   	Consider the matrix $D_1=\Big(d'_{ij}\Big)_{r\times r}$ where $d'_{ij}=d_{h_ij}$ for $1\leq i,j\leq r.$ Clearly, $det(D_1)\neq0.$
   	 Let $D_2=\Big({d''}_{ij}\Big)_{r\times r+1},$ $d''_{ij}=d'_{ij},$ for $1\leq i,j\leq r,$ $d''_{i r+1}=d_{h_is},$ for $1\leq i\leq r$ and let $w=(g_{h_1}(Tz),g_{h_2}(Tz),\dots,g_{h_r}(Tz))^t.$ Then it follows from Lemma \ref{lem1} that the equation 
   	\begin{eqnarray}\label{eq1}
   		D_2v=w
   	\end{eqnarray} 
   	has  a solution of the form $v=(v_1,\dots,v_r,\sum_{i=1}^{r}\gamma_iv_i)^t\in\mathbb{R}^{r+1},$ if $(1+\sum_{i=1}^{r}\delta_i\gamma_i)\neq 0.$  
   	Next, we claim that there exists $q\in\{1,2,\dots,k\}$ such that for $(\gamma_1,\gamma_2,\dots,\gamma_r)=-(\frac{c_{q1}}{c_{qs}},\frac{c_{q2}}{c_{qs}},\dots,\frac{c_{qr}}{c_{qs}}),$ we have  $(1+\sum_{i=1}^{r}\delta_i\gamma_i)\neq 0.$
   	On the contrary to our claim suppose that 
   	\[ (1-\sum_{j=1}^{r}\frac{c_{ij}}{c_{is}}\delta_j)= 0, \forall i\in\{1,2,\dots,k\}.\] Since  $c_{is}\neq 0$ for all $1\leq i\leq k,$ it follows that $c_{is}=\sum_{j=1}^{r}c_{ij}\delta_{j},$ for all $1\leq i\leq k.$ This contradicts the fact that
   	$\tilde{c}_s\notin span~ \big\{\tilde{c}_1,\tilde{c}_2,\dots,\tilde{c}_r \big\}.$ So our claim is established.
   	Therefore, for this $(\gamma_1,\gamma_2,\dots,\gamma_r)=-(\frac{c_{q1}}{c_{qs}},\frac{c_{q2}}{c_{qs}},\dots,\frac{c_{qr}}{c_{qs}}),$ the equation (\ref{eq1}) has a solution in $\mathbb{R}^{r+1}.$ Let the solution be $(\tilde{\eta_1},\tilde{\eta_2},\dots,\tilde{\eta_r},\sum_{j=1}^{r}\gamma_j\tilde{\eta_j }).$\\ 
   	Let us consider $\eta$ as $[ \eta]_{\mathcal{B}_1} =(\eta_1,\eta_2,\dots,\eta_n)^t,$ where
   	\begin{eqnarray*}
   		\eta_i&=&\tilde{\eta_i}, \quad\quad\,\,\text{ if }1\leq i\leq r\\
   		&=&\sum_{j=1}^{r}\gamma_j\tilde{\eta_j}, \text{ if }i=s\\
   		&=&0, \quad\quad\,\,\,\,\text{ otherwise.}
   	\end{eqnarray*}
   	It is easy to see that $\eta\in ker(\tilde{f_q})$. Hence $\eta \in A$. Then for each $1\leq i\leq r,$ $g_{h_i}(T\eta)=\sum_{j=1}^{n}d_{h_ij}\eta_j=
   	\sum_{j=1}^{r}d_{h_ij}\eta_j+d_{h_is}\eta_s=\sum_{j=1}^{r}d''_{ij}\tilde\eta_j+d''_{i r+1}\sum_{j=1}^{r}\gamma_j\tilde{\eta_j}=g_{h_i}(Tz).$ 
   	So for $i\in\{h_1,h_2,\dots,h_r\},$ $ g_i(T\eta)=g_i(Tz).$
   	Now, if $r<p$ then for  $i\in \{1,2,\dots,p\}\backslash\{h_1,h_2,\dots,h_r\},$
   	\begin{eqnarray*}
   		g_i(T\eta)&=& \sum_{t=1}^{r}\lambda_{ih_t}g_{h_t}(T\eta)\\
   		&=& \sum_{t=1}^{r}\lambda_{ih_t}g_{h_t}(Tz)\\
   		&=&  g_i(Tz).
   	\end{eqnarray*}
   	Hence $g_i(T\eta)= g_i(Tz)$ for all $1\leq i\leq p.$ This implies $Tz\in B.$ Since $z$ is chosen arbitrarily from $\mathbb X,$ it follows that $T(\mathbb X)\subset B.$
   \end{proof}

\begin{remark}
	The previous result can be interpreted geometrically in a visually appealing manner. Indeed, it is not difficult to observe that the above theorem admits the following reformulation: Given any $ 1 \leq r < k, $ and any linear map $ T $ on $ \mathbb{R}^{n}, $ if the union of the kernels of all possible convex combinations of $ k $ linearly independent functionals is mapped by $ T $ into the union of the kernels of all possible convex combinations of $ k-r $ linearly independent functionals, then $ T $ necessarily maps $ \mathbb{R}^{n} $ to the kernel of some particular convex combination of the latter $  k-r $ functionals. Since the kernel of any non-zero functional is nothing but a maximal hyperplane in $ \mathbb{R}^n, $ the above theorem asserts that the image of a linear map is contained in a (maximal) hyperplane, provided it maps certain hyperplanes to certain hyperplanes. In particular, for a bijective linear map on $ \mathbb{R}^{n}, $ such a possibility does not exist. 
\end{remark}

As we will see in the next result, the previous property of linear maps turns out to be the key ingredient in producing a connection between local preservation of Birkhoff-James orthogonality by a linear operator and the theory of $ k $-smoothness.

   \begin{theorem}\label{pokm}
   	Let $ \mathbb{X} $ and $ \mathbb{Y} $ be Banach spaces of dimension $n$ and $m,$ respectively. Let $T\in \mathbb L(\mathbb{X},\mathbb{Y})$ and let $ x \in \mathbb{X}$ be a $k$-smooth point of $ \mathbb{X} $ such that $ T $ preserves Birkhoff-James orthogonality at $ x. $ Then  either $Tx = 0 $  or $Tx$  is a $p$-smooth point of $\mathbb{Y},$  for some $k\leq p.$
   \end{theorem}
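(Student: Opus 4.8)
The plan is to argue by contradiction and feed the smoothness hypotheses into Theorem \ref{th1} after rewriting everything in terms of unions of kernels of support functionals. The key tool is the James characterization of Birkhoff-James orthogonality: for any nonzero $u\in\mathbb X$, $u\perp_B v$ holds if and only if $f(v)=0$ for some $f\in J(u)$, so that $u^{\perp_B}=\bigcup_{f\in J(u)}ker(f)$. Consequently, if $Tx\neq 0$, the hypothesis that $T$ preserves Birkhoff-James orthogonality at $x$ translates into the inclusion $T(x^{\perp_B})\subseteq (Tx)^{\perp_B}$.

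Assume $Tx\neq 0$ and let $Tx$ be a $p$-smooth point of $\mathbb Y$; the goal is to show $p\geq k$, and I would suppose to the contrary that $p<k$. Since $dim~span~J(x)=k$, I can select $f_1,\dots,f_k\in J(x)$ forming a basis of $span~J(x)$, and similarly select $g_1,\dots,g_p\in J(Tx)$ forming a basis of $span~J(Tx)$. Setting $F=co(\{f_1,\dots,f_k\})$ and $G=\{\sum_{i=1}^{p}\beta_ig_i:\sum_{i=1}^{p}\beta_i=1\}$, together with $A=\bigcup_{f\in F}ker(f)$ and $B=\bigcup_{g\in G}ker(g)$ exactly as in Theorem \ref{th1}, the argument then rests on two inclusions. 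First, $F\subseteq J(x)$, which is immediate from the convexity of the support set $J(x)$. Second, $J(Tx)\subseteq G$: any $g\in J(Tx)$ lies in $span~\{g_1,\dots,g_p\}$, say $g=\sum_i\beta_ig_i$, and evaluating at $Tx$ gives $\|Tx\|=g(Tx)=\sum_i\beta_ig_i(Tx)=\|Tx\|\sum_i\beta_i$, whence $\sum_i\beta_i=1$ since $\|Tx\|\neq 0$.

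These two inclusions drive the orthogonality chain. From $F\subseteq J(x)$ I obtain $A\subseteq\bigcup_{f\in J(x)}ker(f)=x^{\perp_B}$, so preservation yields $T(A)\subseteq T(x^{\perp_B})\subseteq (Tx)^{\perp_B}$; and from $J(Tx)\subseteq G$ I obtain $(Tx)^{\perp_B}=\bigcup_{g\in J(Tx)}ker(g)\subseteq B$. Hence $T(A)\subseteq B$. Since $p<k$, while $k=dim~span~J(x)\leq n$ and $p=dim~span~J(Tx)\leq m$ hold automatically, Theorem \ref{th1} applies and forces $T(\mathbb X)\subseteq B$; in particular $Tx\in B$.

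The contradiction now comes from the affine structure of $G$. For every $g=\sum_i\beta_ig_i\in G$ one has $g(Tx)=\sum_i\beta_ig_i(Tx)=\|Tx\|\sum_i\beta_i=\|Tx\|\neq 0$, so no kernel appearing in the union $B$ contains $Tx$, i.e. $Tx\notin B$, contradicting $Tx\in B$. Therefore $p<k$ is impossible and $p\geq k$. The step I expect to demand the most care is arranging the data to match Theorem \ref{th1} verbatim: choosing the $g_i$ from within $J(Tx)$ (rather than taking an arbitrary basis of its span) is precisely what makes $J(Tx)\subseteq G$ hold, and the degenerate case $k=1$ needs no appeal to Theorem \ref{th1}, since then $Tx\neq 0$ already gives $p\geq 1=k$.
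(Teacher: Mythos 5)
Your proof is correct and follows essentially the same route as the paper's: the same sets $F$, $G$, $A=\bigcup_{f\in F}\ker(f)$, $B=\bigcup_{g\in G}\ker(g)$, the same claim $J(Tx)\subseteq G$ proved by evaluating at $Tx$, the same application of Theorem \ref{th1} to get $T(\mathbb X)\subseteq B$, and the same contradiction $Tx\notin B$. The only differences are cosmetic: you make explicit the appeal to James' characterization of $\perp_B$ (which the paper leaves implicit in ``Clearly, $A\subset x^{\perp_B}$'' and ``$J(Tx)\subset G\implies (Tx)^{\perp_B}\subset B$'') and you note the trivial case $k=1$.
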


   \begin{proof}
   	Let $x\in \mathbb{X}$ be a $k$-smooth point. Then there exist $k$ linearly independent $f_1, f_2, ..., f_k\in J(x).$ Consider the set $F=co(\{f_1, f_2, ..., f_k\})$. Suppose that $A=\bigcup\limits_{f\in F}ker(f).$ Clearly, $A\subset x^{\perp_B}.$ Let $Tx\neq 0.$ If possible, suppose that $Tx$ is a $p$-smooth point, for some $p<k.$ Then there exist $p$ linearly independent $g_1, g_2, ..., g_p\in J(Tx).$ Consider the set  $G=\Big \{\sum_{i=1}^{p}\beta_if_i :\sum_{i=1}^{p}\beta_i=1, \beta_i\in \mathbb{R} \,\, \forall\, i =1,2,\ldots,p\Big \}.$ Suppose that $B=\bigcup\limits_{g\in G}ker(g).$  
   	We claim that $J(Tx)\subset G.$ Let $g\in J(Tx),$ then $g$ can be expressed as $g=\sum_{i=1}^{p}c_ig_i,$ where $c_i\in\mathbb{R}.$ 
   	Since $g,g_1, g_2, ..., g_p\in J(Tx)$ , it follows that $\sum_{i=1}^{p}c_i=1.$ Hence $g\in G.$ Thus our claim is established.
   	Observe that $ J(Tx)\subset G  \implies {(Tx)}^{\perp_B}\subset B.$ By the hypothesis,  $T(x^{\perp_B}) \subset {(Tx)}^{\perp_B}.$ Thus we get,  $T(A)\subset B.$ Now, it follows from Theorem \ref{th1} that $T(\mathbb{X})\subset B.$ But for each $g\in G$, $g(Tx)=\| Tx\|$  and so  $Tx\notin B.$ This contradicts the fact that $T(\mathbb{X})\subset B.$ Therefore, our assumption that $Tx$ is a $p$-smooth point for some $p<k$ is not correct. Hence  $Tx$ is a $p$-smooth point for some $p\geq k.$ This completes the proof.
   \end{proof}

The following corollary to the above theorem is particularly useful in obtaining refinements of the Blanco-Koldobsky-Turnsek Theorem in certain specific cases. We omit the proof, since in light of the previous result, it is clear from the well-known fact that in any polyhedral Banach space of dimension $ n, $ a unit vector is an extreme point of the unit ball if and only if it is $ n $-smooth.

   \begin{cor}\label{ioe}
   	Let $ \mathbb{X} $ and $ \mathbb{Y} $ be $n$-dimensional polyhedral Banach spaces.  If $T\in\mathbb{L(\mathbb{X},\mathbb{Y})}$ preserves Birkhoff-James orthogonality at an extreme point $x$ of $B_{\mathbb X},$ then $Tx$ is a scalar multiple of some extreme point of $B_{\mathbb Y}.$
   \end{cor}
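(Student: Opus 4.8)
The plan is to feed the extreme point $x$ into Theorem \ref{pokm} and then squeeze the resulting order of smoothness of $Tx$ between $n$ and $n$, using the dimension of $\mathbb{Y}$ together with the advertised equivalence. First I would record that equivalence in both spaces: in any $n$-dimensional polyhedral Banach space, a unit vector is an extreme point of the unit ball if and only if it is $n$-smooth. Since an extreme point of $B_{\mathbb{X}}$ necessarily lies on $S_{\mathbb{X}}$, the hypothesis tells us that $x$ is a unit vector, and hence, by this equivalence applied in $\mathbb{X}$, that $x$ is an $n$-smooth point. Thus we are exactly in the situation of Theorem \ref{pokm} with $k = n$.

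Next I would apply Theorem \ref{pokm} to the orthogonality-preserving operator $T$ at the $n$-smooth point $x$. This yields the dichotomy that either $Tx = 0$, or $Tx$ is a $p$-smooth point of $\mathbb{Y}$ for some $p \geq n$. In the first case $Tx = 0$ is trivially $0$ times any extreme point of $B_{\mathbb{Y}}$, and there is nothing further to prove, so the substantive case is $Tx \neq 0$.

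Assuming $Tx \neq 0$, the forcing step is the observation that $J(Tx) \subset \mathbb{Y}^*$ with $\dim \mathbb{Y}^* = n$, so that $p = \dim \operatorname{span} J(Tx) \leq n$; combined with the lower bound $p \geq n$ from the theorem, this pins $p = n$, i.e. $Tx$ is $n$-smooth in $\mathbb{Y}$. To convert this into an extreme-point statement I would normalize: because $J(\cdot)$, and therefore the order of smoothness, is invariant under multiplication by a non-zero scalar, the unit vector $Tx / \|Tx\|$ is itself $n$-smooth, and the equivalence applied now in $\mathbb{Y}$ shows $Tx / \|Tx\|$ is an extreme point of $B_{\mathbb{Y}}$. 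Hence $Tx = \|Tx\|\,(Tx/\|Tx\|)$ is the desired scalar multiple of an extreme point.

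The only genuine content here is Theorem \ref{pokm}; the rest is bookkeeping. The step requiring the most care is the pinning of $p$: one must explicitly invoke $\dim \mathbb{Y} = n$ to obtain the upper bound $p \leq n$ that complements the lower bound $p \geq k = n$ supplied by the theorem, and one must remember the homogeneity of $n$-smoothness so that the conclusion for the non-unit vector $Tx$ can legitimately be transferred to its normalization before the extreme-point equivalence is applied. No step presents a real obstacle beyond these two routine points.
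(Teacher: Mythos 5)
Your proposal is correct and is precisely the argument the paper has in mind: the paper omits the proof, stating that the corollary follows from Theorem \ref{pokm} together with the well-known fact that in an $n$-dimensional polyhedral Banach space a unit vector is an extreme point of the unit ball if and only if it is $n$-smooth, which is exactly the route you take. Your write-up simply supplies the bookkeeping the paper leaves implicit (the upper bound $p \leq \dim \mathbb{Y}^* = n$ pinning $p = n$, the homogeneity of $J(\cdot)$ allowing the passage to $Tx/\|Tx\|$, and the trivial case $Tx = 0$).
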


In case we consider the preservation of Birkhoff-James orthogonality in the opposite direction (the meaning of which will be clear from the statement of the next theorem), then we have the following information on the preservation of order of smoothness.

   \begin{theorem}\label{poorp}
   	Let $ \mathbb{X} $ and $ \mathbb{Y} $ be Banach spaces of dimensions $n$ and $m,$ respectively. Suppose that $x$ is a $k$-smooth point of $\mathbb{X}.$  If $T\in \mathbb L(\mathbb{X},\mathbb{Y})$ is such that for any $y\in \mathbb{X},$ $Tx\perp_{B}Ty\implies x\perp_{B}y,$ then $Tx$ is a $p$-smooth point of $Range(T),$ for some $p\leq k.$
   \end{theorem}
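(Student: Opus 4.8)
The plan is to reduce this to an application of Theorem \ref{th1}, but applied in a suitably dualized or transposed form, since the hypothesis here concerns preservation of orthogonality in the reverse direction ($Tx \perp_B Ty \implies x \perp_B y$) rather than the forward direction treated in Theorem \ref{pokm}. The key geometric translation is that the reverse implication says: whenever $Ty$ lies in $(Tx)^{\perp_B}$, then $y$ lies in $x^{\perp_B}$. Equivalently, $T^{-1}\bigl((Tx)^{\perp_B}\bigr) \subset x^{\perp_B}$, where I interpret the preimage set-theoretically. This is a containment of the form ``$T$ pulls back the orthogonal complement of $Tx$ into the orthogonal complement of $x$,'' which is the natural dual companion of the push-forward hypothesis $T(x^{\perp_B}) \subset (Tx)^{\perp_B}$ used earlier.

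First I would set $Range(T) =: \mathbb{Z}$ and work with $Tx$ as an element of $\mathbb{Z}$; since the conclusion concerns $p$-smoothness of $Tx$ relative to $Range(T)$, it is cleanest to regard $T : \mathbb{X} \to \mathbb{Z}$ as a surjection onto $\mathbb{Z}$. Let $Tx$ be $q$-smooth in $\mathbb{Z}$, so $\dim\,span\,J(Tx) = q$ with $J(Tx) \subset S_{\mathbb{Z}^*}$, and suppose for contradiction that $q > k$. Then $(Tx)^{\perp_B} = \bigcup_{g \in co(J(Tx))} ker(g)$ is built from a convex hull of $q$ linearly independent functionals, while $x^{\perp_B} = \bigcup_{f \in co(J(x))} ker(f)$ uses only $k$ independent functionals. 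The reverse-preservation hypothesis gives $T^{-1}\bigl((Tx)^{\perp_B}\bigr) \subset x^{\perp_B}$; pushing this forward, one obtains that $T$ maps the ``large'' union of kernels (pulled back) into the ``small'' union of kernels, and I would aim to feed exactly this configuration into Theorem \ref{th1} with the roles of the $k$ and $p$ parameters set so that $q$ plays the role of the larger count and $k$ the smaller, forcing $Range(T)$ itself into a single hyperplane $ker(g_0)$ for some $g_0 \in co(J(Tx))$. Since $g_0(Tx) = \|Tx\| \neq 0$, this contradicts $Tx \in Range(T) \subset ker(g_0)$, exactly as in the proof of Theorem \ref{pokm}.

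\textbf{The main obstacle} I expect is the direction of the set containment and whether Theorem \ref{th1} can be invoked directly or only after transposing. In Theorem \ref{pokm} the hypothesis $T(A) \subset B$ is a genuine \emph{image} containment, perfectly matched to Theorem \ref{th1}; here the natural hypothesis is a \emph{preimage} containment $T^{-1}(B') \subset A'$, which is not literally of the form $T(\text{something}) \subset \text{something}$. I would resolve this by passing to the adjoint $T^* : \mathbb{Z}^* \to \mathbb{X}^*$ and reinterpreting orthogonality conditions functionally: the condition $Tx \perp_B Ty$ is witnessed by some $g \in J(Tx)$ with $g(Ty) = 0$, i.e. $(T^*g)(y) = 0$, so the reverse-preservation property should translate into a statement that every $T^*g$ with $g \in J(Tx)$ lies in $span\,J(x)$ (or in $co(J(x))$ after normalization). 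Establishing that $\{T^*g : g \in J(Tx)\} \subset span\,J(x)$ would immediately bound $q = \dim\,span\,J(Tx) \leq \dim\,span\,J(x) = k$ provided $T^*$ is injective on $span\,J(Tx)$, which holds because $T$ is surjective onto $\mathbb{Z} = Range(T)$ and hence $T^*$ is injective on $\mathbb{Z}^*$.

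Consequently the cleanest route may bypass Theorem \ref{th1} entirely: I would verify that each support functional $g \in J(Tx)$ pulls back under $T^*$ to a scalar multiple of a functional in $J(x)$, using the defining inequality $\|Tx + \lambda Ty\| \geq \|Tx\|$ together with the reverse-preservation hypothesis to show $T^*g$ attains its norm-type extremal condition at $x$; this yields a linear injection $T^*|_{span\,J(Tx)} : span\,J(Tx) \hookrightarrow span\,J(x)$ and therefore $p := q \leq k$. I would present both the geometric argument via Theorem \ref{th1} (for conceptual continuity with Theorem \ref{pokm}) and the functional-analytic shortcut, and the \emph{hard part} will be making the pullback step rigorous — precisely, confirming that $g \in J(Tx)$ forces $T^*g \in \mathbb{R}_{>0} \cdot J(x)$ rather than merely $T^*g \in span\,J(x)$, so that the dimension count is valid and the identification of $Tx$ as a $p$-smooth point with $p \leq k$ goes through cleanly.
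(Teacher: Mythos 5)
Your ``functional-analytic shortcut'' (the second route) is correct, but it is genuinely different from the paper's proof. The paper resolves the direction-reversal not through the adjoint but by a restriction trick: take a complement $\mathbb{V}$ of $\ker(T)$ containing $x$, note that $T_{\mathbb{V}}:\mathbb{V}\rightarrow Range(T)$ is bijective, and observe that the hypothesis says exactly that $T_{\mathbb{V}}^{-1}$ preserves orthogonality in the \emph{forward} sense at $T_{\mathbb{V}}x$; Theorem \ref{pokm} applied to $T_{\mathbb{V}}^{-1}$ then gives that if $Tx$ is $p$-smooth in $Range(T)$, $x$ is $p_1$-smooth in $\mathbb{V}$ with $p_1\geq p$, and finally $p_1\leq k$ since the order of smoothness can only drop on passing to a subspace (restriction of functionals plus Hahn--Banach). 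Your argument instead pulls back support functionals: for $g\in J(Tx)$ (computed in $(Range(T))^*$), the hypothesis gives $\ker(T^*g)\subset x^{\perp_B}$, i.e.\ $x\perp_B \ker(T^*g)$, and the classical duality theorem of James \cite{J47} ($x\perp_B\ker f$ if and only if $|f(x)|=\|f\|\,\|x\|$) then yields $T^*g\in\mathbb{R}_{>0}\cdot J(x)$ --- this is precisely the ``hard part'' you flag, and it does go through: since $(T^*g)(x)=\|Tx\|>0$, decompose any $z$ as $\alpha x+y$ with $y\in\ker(T^*g)$ and use $\|z\|\geq|\alpha|\,\|x\|$ to see that $T^*g$ attains its norm at $x$. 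Combined with the injectivity of $T^*$ on $(Range(T))^*$, which you correctly justify by surjectivity of $T$ onto its range, this gives $\dim span~J(Tx)=\dim span~T^*(J(Tx))\leq\dim span~J(x)=k$ directly. What each approach buys: the paper's proof is very short given the machinery already built (Theorems \ref{th1} and \ref{pokm}); yours bypasses that machinery entirely, is self-contained modulo James' theorem, and establishes the stronger structural fact $T^*(J(Tx))\subset\mathbb{R}_{>0}\cdot J(x)$. Two small patches: your first route (feeding a preimage containment into Theorem \ref{th1}) should simply be dropped, as you yourself suspect it cannot be invoked directly; and you should record at the outset that $Tx\neq 0$ (if $Tx=0$ the hypothesis would force $x\perp_B x$, contradicting $x\neq 0$), so that $J(Tx)$ is nonempty and the dimension count makes sense.
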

   \begin{proof}
   	Let $T\in \mathbb L(\mathbb{X},\mathbb{Y})$ be such that for any $y\in \mathbb{X}$, $Tx\perp_{B}Ty\implies x\perp_{B}y.$ Let the rank of $T$ be $r$. Clearly, $x \neq 0$ and by the hypothesis $ Tx \neq 0$ so that $ x \not \in ker(T).$   Let $\{u_1,u_2,\dots,u_{n-r}\}$ be a basis of $ ker(T).$ Then 
   	$\{u_1,u_2,\dots,u_{n-r},x\}$ is a linearly independent set of $\mathbb{X}$ 
   	and  can be extended to a basis of $\mathbb{X}$ by adding the elements $v_1,v_2,\dots,v_{r-1}.$ Let $\mathbb{V}=span~\{v_1,v_2,\dots,v_{r-1},x\}.$ Let $T_{\mathbb{V}}$ be the restriction of $T$ on $\mathbb{V},$ i.e., $T_{\mathbb{V}}:\mathbb{V}\rightarrow Range(T)$ be such that $T_{\mathbb{V}}z=Tz$ for all $z\in\mathbb{V}.$ Clearly, $T_{\mathbb{V}}$ is bijective and so $T_{\mathbb{V}}^{-1}:Range(T)\rightarrow \mathbb{V}$ exists.
   	Now, $T_{\mathbb{V}}x\perp_{B}T_{\mathbb{V}}y\implies x\perp_{B}y,$ i.e., $T_{\mathbb{V}}x\perp_{B}T_{\mathbb{V}}y\implies T_{\mathbb{V}}^{-1}(T_{\mathbb{V}}x)\perp_{B}T_{\mathbb{V}}^{-1}(T_{\mathbb{V}}y).$ Therefore, it follows from Theorem \ref{pokm} that if $T_{\mathbb{V}}x$ is a $p$-smooth point of $Range(T)$ then $T_{\mathbb{V}}^{-1}(T_{\mathbb{V}}x)$ is a $p_1$-smooth point of $\mathbb{V}$ for some $p_1\geq p.$ So $x$ is $p_1$-smooth
   	point of $\mathbb{V},$ where $p_1\geq p.$ Since  $x$ is $k$-smooth
   	point of $\mathbb{X},$ it follows that $k\geq p_1.$ Thus $ k \geq p.$ This completes the proof.
   \end{proof}
   Combining Theorem \ref{pokm} with Theorem \ref{poorp}, we get the following result.
   \begin{theorem}
   	Let $ \mathbb{X} $ and $ \mathbb{Y} $ be Banach spaces of dimension $n$ and $m,$ respectively. Suppose that $x$ is a $k$-smooth point of $\mathbb{X}.$ If $T\in \mathbb L(\mathbb{X},\mathbb{Y})$ be such that for any $y\in \mathbb{X},$ $Tx\perp_{B}Ty\iff x\perp_{B}y,$ then $Tx$ is a $k$-smooth point of $Range(T).$
   \end{theorem}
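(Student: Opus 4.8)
The plan is to combine the two directional smoothness results already established, namely Theorem~\ref{pokm} and Theorem~\ref{poorp}, by observing that the biconditional hypothesis $Tx \perp_B Ty \iff x \perp_B y$ splits naturally into its two one-directional implications. First I would note that $x$ is a nonzero $k$-smooth point of $\mathbb{X}$, and that the forward implication $x \perp_B y \Rightarrow Tx \perp_B Ty$ is precisely the statement that $T$ preserves Birkhoff-James orthogonality at $x$; the reverse implication $Tx \perp_B Ty \Rightarrow x \perp_B y$ is precisely the hypothesis of Theorem~\ref{poorp}.

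The second step is to extract a lower bound on the order of smoothness of $Tx$. Since the biconditional guarantees $Tx \neq 0$ (if $Tx = 0$, then $Tx \perp_B Ty$ holds trivially for every $y$, forcing $x \perp_B y$ for all $y$, which is impossible for a nonzero $x$ in a Banach space), Theorem~\ref{pokm} applies via the forward implication and tells us that $Tx$ is a $p$-smooth point with $p \geq k$. Here I must be slightly careful: Theorem~\ref{pokm} measures smoothness of $Tx$ as a point of $\mathbb{Y}$, whereas the conclusion we seek is about smoothness of $Tx$ as a point of $Range(T)$. I would address this by applying Theorem~\ref{pokm} with the codomain taken to be $Range(T)$ itself, viewing $T$ as a map $\mathbb{X} \to Range(T)$; this is legitimate since preservation of orthogonality is a statement about the vectors $Tx, Ty$ which all lie in $Range(T)$, and Birkhoff-James orthogonality of $Tx$ to $Ty$ within $Range(T)$ is the same relation as within $\mathbb{Y}$ when $Ty$ ranges over $Range(T)$.

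The third step extracts the matching upper bound. Applying Theorem~\ref{poorp} directly, using the reverse implication, yields that $Tx$ is a $p'$-smooth point of $Range(T)$ for some $p' \leq k$. Combining the two bounds, $Tx$ is simultaneously $p$-smooth with $p \geq k$ and $p'$-smooth with $p' \leq k$ as a point of $Range(T)$; since the order of smoothness of a fixed nonzero point is a well-defined number (it equals $\dim \operatorname{span} J(Tx)$ computed in the fixed space $Range(T)$), we must have $p = p' $, and hence both equal $k$. Therefore $Tx$ is a $k$-smooth point of $Range(T)$.

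The step I expect to require the most care is reconciling the ambient spaces in which smoothness is measured, so that Theorem~\ref{pokm} and Theorem~\ref{poorp} speak about the \emph{same} quantity. Theorem~\ref{poorp} already phrases its conclusion in terms of $Range(T)$, so the burden is to reinterpret Theorem~\ref{pokm} consistently: I would invoke it with $\mathbb{Y}$ replaced by $Range(T)$, noting that $Range(T)$ is itself a finite-dimensional Banach space and that $T$ preserving orthogonality at $x$ into $\mathbb{Y}$ is equivalent to $T$ preserving it at $x$ into $Range(T)$, because every $Ty$ already belongs to $Range(T)$ and the defining inequality $\|Tx + \lambda Ty\| \geq \|Tx\|$ is unaffected by the choice of ambient space. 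Once this alignment is in place, the two inequalities on the smoothness order pinch it to exactly $k$, and the proof concludes.
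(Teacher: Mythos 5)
Your proposal is correct and follows exactly the route the paper intends: the paper gives no written proof, stating only that the result follows by ``combining Theorem \ref{pokm} with Theorem \ref{poorp},'' which is precisely your splitting of the biconditional into its two implications to pinch the smoothness order at $k$. Your extra care in applying Theorem \ref{pokm} with codomain $Range(T)$ rather than $\mathbb{Y}$ (so that both bounds refer to $\dim\,span\,J(Tx)$ computed in the same space) is a genuine point the paper leaves implicit, and you resolve it correctly, since Birkhoff-James orthogonality and the preservation hypothesis are unaffected by passing to the subspace $Range(T)$.
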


In the remaining part of the article, our aim is to study the role of local preservation of Birkhoff-James orthogonality in understanding isometries on polyhedral Banach spaces, including some important specific examples. We begin by considering the two-dimensional case.

    \begin{lemma}\label{bijective}
   	Let $ \mathbb{X} $ and $\mathbb{Y} $ be two-dimensional polyhedral Banach spaces. If a non-zero operator $T\in\mathbb L(\mathbb{X},\mathbb Y)$ preserves Birkhoff-James orthogonality at any two linearly independent extreme points of $B_\mathbb{X},$ then $T$ is bijective.
   \end{lemma}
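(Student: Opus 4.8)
The claim is that a non-zero operator $T \in \mathbb{L}(\mathbb{X}, \mathbb{Y})$ between two-dimensional polyhedral spaces, which preserves Birkhoff-James orthogonality at two linearly independent extreme points $x_1, x_2$ of $B_{\mathbb{X}}$, must be bijective. Since $\dim \mathbb{X} = \dim \mathbb{Y} = 2$, bijectivity is equivalent to injectivity, and injectivity will follow once I show that $Tx_1$ and $Tx_2$ are linearly independent. The plan is therefore to rule out the degenerate possibilities: neither $Tx_1$ nor $Tx_2$ can vanish, and they cannot be parallel.

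**The main steps.**
First I would invoke Corollary~\ref{ioe} directly. Since $x_1$ and $x_2$ are extreme points of $B_{\mathbb{X}}$ and $T$ preserves orthogonality at each, the corollary tells me that each image $Tx_i$ is either $0$ or a scalar multiple of an extreme point of $B_{\mathbb{Y}}$. So the first task is to eliminate the case $Tx_i = 0$. Suppose, toward a contradiction, that $Tx_1 = 0$. Because $\{x_1, x_2\}$ is a basis of the two-dimensional space $\mathbb{X}$, the kernel of $T$ would contain $x_1$, and I would need to show that preservation of orthogonality at $x_1$ forces $T$ itself to vanish, contradicting $T \neq 0$. The idea is that if $Tx_1 = 0$, then the orthogonality-preservation condition at $x_1$ becomes vacuous there, but one can still use the condition at $x_2$: since $x_2$ is extreme (hence $2$-smooth in a polyhedral plane, i.e.\ a vertex of $B_{\mathbb{X}}$), there are directions $y$ with $x_2 \perp_B y$, and $x_1$ is available as an independent vector to probe these. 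I expect to conclude that $Tx_2$ must also degenerate, forcing $T = 0$.

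**Ruling out parallel images.**
Having secured $Tx_1 \neq 0$ and $Tx_2 \neq 0$, by Corollary~\ref{ioe} both are nonzero scalar multiples of extreme points of $B_{\mathbb{Y}}$, hence both are themselves $2$-smooth (vertex) directions in the polyhedral plane $\mathbb{Y}$. The remaining obstacle is to exclude the possibility $Tx_2 = \mu\, Tx_1$ for some scalar $\mu \neq 0$. Here I would argue geometrically: if the images were parallel, then $T(\mathbb{X}) = \mathrm{span}\{Tx_1\}$ is one-dimensional, so the whole image lies on a single line, i.e.\ inside the kernel of some nonzero functional $g \in \mathbb{Y}^*$. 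I would combine this with the orthogonality data at the two extreme points to derive a contradiction, most naturally by applying Theorem~\ref{pokm}: preservation of orthogonality at the extreme (hence $2$-smooth) point $x_1$ forces $Tx_1$ to be a $p$-smooth point with $p \geq 2$, so $Tx_1$ is genuinely $2$-smooth (extreme) in $\mathbb{Y}$; the same holds for $Tx_2$. Two parallel vectors determine the same ray direction, and I would show the orthogonality structure inherited from the two distinct extreme points $x_1, x_2$ cannot both be pushed onto a single one-dimensional image without collapsing the orthogonality relation, contradicting preservation.

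**Expected difficulty.**
The routine part is the dimension count and the application of Corollary~\ref{ioe}. The genuine obstacle is the parallel case: showing that $Tx_1$ and $Tx_2$ cannot be collinear. The clean way is to note that collinearity makes $T(\mathbb{X})$ one-dimensional, so $T(\mathbb{X}) \subset \ker(g)$ for some $g \neq 0$; I then want to realize this one-dimensional image as $B = \bigcup_{g \in G}\ker(g)$ for a suitable singleton $G$ and invoke Theorem~\ref{th1} (with $k = 2$, $p = 1$, $r = 1$) to force a contradiction with the fact that the two extreme points supply two linearly independent functionals $f_1, f_2 \in J(x_i)$ whose convex-combination kernels are mapped into $B$. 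That is precisely the configuration Theorem~\ref{th1} forbids for an image that is supposed to avoid lying in a single hyperplane while preserving the orthogonality at a $2$-smooth point, so this is where I would anchor the contradiction.
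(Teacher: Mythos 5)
Your reduction of bijectivity to the linear independence of $Tx_1$ and $Tx_2$, and your appeal to Corollary \ref{ioe}, are fine. The gap is exactly in the step you call the genuine obstacle: ruling out collinear images. You propose to take a functional $g\neq 0$ with $T(\mathbb{X})\subset\ker(g)$, set $B=\ker(g)$ for the singleton $G=\{g\}$, and then ``invoke Theorem \ref{th1} to force a contradiction.'' But Theorem \ref{th1} cannot forbid anything: it is an implication whose conclusion is precisely $T(\mathbb{X})\subset B$, and in your setup that containment holds by construction, so the theorem returns a statement you already know. The contradiction in the proof of Theorem \ref{pokm} works only because there $G$ is built from support functionals of the image point, so that $g(Tx)=\|Tx\|\neq 0$ and hence $Tx\notin B$; your $B$ contains the whole range, in particular $Tx_1$, so no contradiction can arise. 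Similarly, applying Theorem \ref{pokm} with codomain $\mathbb{Y}$ only yields that $Tx_1$ and $Tx_2$ are $2$-smooth points of $\mathbb{Y}$ (multiples of vertices), which is perfectly consistent with their being parallel. Your first case ($Tx_1=0$) is likewise left at the level of ``I expect to conclude,'' without the mechanism that closes it.

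The missing mechanism, which is what the paper uses and which settles both of your cases at once, is elementary: suppose $T$ is not bijective and relabel so that $y:=Tx_1\neq 0$; then $T(\mathbb{X})=\mathrm{span}\{y\}$, and $\mathrm{span}\{y\}\cap y^{\perp_B}=\{0\}$, because $\|y+\lambda ky\|=|1+\lambda k|\,\|y\|$ drops below $\|y\|$ for a suitable $\lambda$ whenever $k\neq 0$. Since $x_1$ is an extreme point of the two-dimensional polyhedral ball $B_{\mathbb{X}}$, there exist two \emph{linearly independent} vectors $z_1,z_2$ with $x_1\perp_B z_1$ and $x_1\perp_B z_2$; preservation at $x_1$ then gives $Tz_1,Tz_2\in\mathrm{span}\{y\}\cap y^{\perp_B}=\{0\}$, so $T$ annihilates a basis of $\mathbb{X}$ and $T=0$, contradicting $T\neq 0$. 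If you insist on routing the argument through the $k$-smoothness machinery, the correct move is to apply Theorem \ref{pokm} to $T$ regarded as a map into its one-dimensional range (Birkhoff--James orthogonality between vectors of a subspace is the same computed in the subspace or in $\mathbb{Y}$): a one-dimensional space has no $p$-smooth points with $p\geq 2$, so the theorem forces $Tx_1=0$, again a contradiction. Either repair works; the choice of $B$ you describe does not.
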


   \begin{proof}
   	Let $T\in\mathbb L(\mathbb{X},\mathbb Y)$ be such that $T$ preserves Birkhoff-James orthogonality at two linearly independent extreme points $x_1,x_2$ of $B_\mathbb{X}.$	Since $T\neq 0,$ it follows that either $Tx_1\neq0$ or $Tx_2\neq0.$ Without loss of generality assume that $Tx_1=y\neq 0.$ If possible, suppose that $T$ is not bijective. So for every $z\in\mathbb{X},$ $Tz=ky$ for some $k\in\mathbb{R}.$ Hence $y^{\perp_B}\cap\{Tz:z\in\mathbb{X}\}=\{0\}.$ Since $\mathbb X$ is polyhedral and $x_1$ is an extreme point of $B_\mathbb{X},$ there exist two linearly independent $z_1,z_2\in\mathbb{X}$ such that $x_1\perp_{B}z_1$ and  $x_1\perp_{B}z_2.$ Hence $Tz_1=Tz_2=0.$ This contradicts the hypothesis that  $T\neq 0.$ Therefore, $T$ is bijective.
   \end{proof}

The above lemma helps us to study the preservation of extreme points (up to a suitable scaling) under the assumption of preservation of Birkhoff-James orthogonality at each extreme point of the unit ball of the domain space.

   \begin{theorem}\label{icext}
   	Let $ \mathbb{X} $ and $\mathbb{Y} $ be two-dimensional polyhedral Banach spaces whose unit spheres are $2n$-gon for some $n\in\mathbb N.$ If a non-zero operator $T\in \mathbb L(\mathbb{X},\mathbb{Y})$ preserves  Birkhoff-James orthogonality at each extreme point of $B_\mathbb{X}, $ then $T$ maps two consecutive extreme points of $B_\mathbb{X} $ to scalar multiples of two consecutive extreme points of $B_\mathbb{Y}.$ 
   \end{theorem}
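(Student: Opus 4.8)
The plan is to reduce everything to two facts about planar linear bijections: that $T$ must be bijective, and that a linear isomorphism of the plane preserves the cyclic (angular) order of directions. First I would observe that since the unit sphere of $\mathbb{X}$ is a $2n$-gon, any two adjacent extreme points of $B_{\mathbb{X}}$ are linearly independent (adjacent vertices of a centrally symmetric polygon are never antipodal, and the unit sphere meets each ray from the origin exactly once). As $T$ preserves Birkhoff-James orthogonality at both of them, Lemma \ref{bijective} applies and shows that $T$ is bijective. Applying Corollary \ref{ioe} at every extreme point of $B_{\mathbb{X}}$ then tells us that $T$ carries each extreme point of $B_{\mathbb{X}}$ to a scalar multiple of some extreme point of $B_{\mathbb{Y}}$.

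Next I would set up the combinatorial skeleton. Write the extreme points of $B_{\mathbb{X}}$ as $\pm v_1,\dots,\pm v_n$ listed in cyclic order around the unit sphere, and similarly $\pm w_1,\dots,\pm w_n$ for $B_{\mathbb{Y}}$. Because $T$ is linear and injective, distinct lines $\mathrm{span}\{v_i\}$ are carried to distinct lines, and by the previous paragraph each image line $\mathrm{span}\{Tv_i\}$ coincides with $\mathrm{span}\{w_j\}$ for some $j$. Since there are exactly $n$ such extreme-point lines on each side, the assignment $\mathrm{span}\{v_i\}\mapsto\mathrm{span}\{Tv_i\}$ is an injection of an $n$-element set into an $n$-element set, hence a bijection onto the full collection of extreme-point lines of $B_{\mathbb{Y}}$. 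Thus $T$ matches the $2n$ extreme directions of $B_{\mathbb{X}}$ bijectively with the $2n$ extreme directions of $B_{\mathbb{Y}}$.

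The heart of the argument, and the step I expect to be the main obstacle, is to show that this matching respects consecutiveness. Here I would invoke the fact that a linear bijection of the plane induces a monotone (orientation-preserving or orientation-reversing) homeomorphism of the circle of directions: the image of the unit circle under $T$ is an ellipse centred at the origin, which is traversed monotonically in angle, so the argument of $Tu(\theta)$ varies monotonically with $\theta$ and changes by exactly $\pm 2\pi$ over one revolution. Consequently $T$ preserves the cyclic order of directions, up to a global reversal. Reading the extreme points $v_1,v_2,\dots$ of $B_{\mathbb{X}}$ in their cyclic order, their images therefore occur in the cyclic order of the extreme directions of $B_{\mathbb{Y}}$ (up to reversal and a shift). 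In particular, any two consecutive extreme points of $B_{\mathbb{X}}$ are sent to scalar multiples of two consecutive extreme points of $B_{\mathbb{Y}}$, which is precisely the assertion; in fact this argument yields the stronger conclusion that every consecutive pair behaves in this way.
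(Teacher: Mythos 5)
Your proof is correct, but the decisive step is handled quite differently from the paper. Both arguments begin the same way: bijectivity of $T$ via Lemma \ref{bijective}, scaled-extreme-point images via Corollary \ref{ioe}, and the counting observation that, both spheres being $2n$-gons with the same $n$, the injective assignment of extreme-point lines of $B_\mathbb{X}$ to extreme-point lines of $B_\mathbb{Y}$ is in fact a bijection (the paper leaves this counting implicit when it asserts that $T^{-1}$ also carries extreme points of $B_\mathbb{Y}$ to scalar multiples of extreme points of $B_\mathbb{X}$; you make it explicit, which is a small improvement in rigor). Where you genuinely diverge is the consecutiveness claim itself. The paper argues by contradiction inside the polygon: if consecutive vertices $u_1,u_2$ went to $k_1v_k$, $k_2v_l$ with $v_k,v_l$ non-consecutive, it picks an extreme point $v$ of $B_\mathbb{Y}$ in the open cone between $v_k$ and $v_l$, writes $v=\alpha(1-t)v_k+\alpha t v_l$ with $\alpha>0$, $t\in(0,1)$, and computes $T^{-1}v=(1-t)\frac{\alpha}{k_1}u_1+t\frac{\alpha}{k_2}u_2$, a strictly positive combination of $u_1$ and $u_2$; since $T^{-1}v$ must itself be a scalar multiple of an extreme point of $B_\mathbb{X}$, this would place an extreme direction strictly between the consecutive vertices $u_1,u_2$, a contradiction. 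You instead invoke the fact that a planar linear bijection induces a monotone, degree $\pm1$ map on the circle of directions (justifiable, e.g., by noting that $\frac{d}{d\theta}\arg\bigl(Tu(\theta)\bigr)=\det T/\|Tu(\theta)\|_2^2$ has constant sign), so the cyclic order of rays is preserved up to a global reversal, and a cyclic-order-preserving bijection between two cyclically ordered sets of the same finite cardinality sends consecutive elements to consecutive elements. Both routes are sound and of comparable length; the paper's is self-contained convexity with no auxiliary Euclidean structure, while yours is more conceptual and delivers slightly more, namely the global coherence of the correspondence (the entire cyclic sequence of vertices is carried, up to reversal and shift, onto the cyclic sequence of vertices of $B_\mathbb{Y}$), rather than consecutiveness verified pair by pair.
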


   \begin{proof}
   	Let $T\in \mathbb L(\mathbb X,\mathbb Y)$ be such that $T$ preserves Birkhoff-James orthogonality at each extreme point of $B_\mathbb{X} .$ Therefore, it follows from Corollary \ref{ioe} that the image of each extreme point of $B_\mathbb{X} $ is a scalar multiple of some extreme point of $B_\mathbb{Y} .$ Applying Lemma \ref{bijective}, we conclude that $T$ is bijective.
   	If possible suppose that $Tu_1=k_1v_k$ and $Tu_2=k_2v_l,$ where $u_1, u_2$ are consecutive extreme points of $B_\mathbb{X} $ but $v_k,v_l$ are two non-consecutive extreme points of $B_\mathbb{Y} $ and $k_1,k_2>0.$ Since $v_k, v_l$ are two non consecutive extreme points of $B_\mathbb{Y},$ there exists at least one extreme point $v$ of $B_\mathbb{Y} $ such that $v=\alpha (1-t)v_k+\alpha  t v_l$ for some $\alpha>0$ and $t\in(0,1).$ Since $T$ is bijective and the image of  each extreme point of $B_\mathbb{X} $ is a scalar multiple of some extreme point of $B_\mathbb{Y},$ it follows that inverse image of  each extreme point of $B_\mathbb{Y} $ is also a scalar multiple of some extreme point of $B_\mathbb{X}.$ So $T^{-1}v$ is a scalar multiple of an extreme point of $B_\mathbb{X}.$  Now, $T^{-1}v=T^{-1}(\alpha (1-t)v_k+\alpha  t v_l)=(1-t)\frac{\alpha}{k_1}u_1+t \frac{\alpha}{k_2} u_2.$ This contradicts the hypothesis that $u_1,u_2$ are two consecutive extreme points of $B_\mathbb{X}.$ Hence the images of two consecutive extreme points of $B_\mathbb{X}$ are scalar multiples of two consecutive extreme points of $B_\mathbb{Y}.$
   \end{proof}

Our next result gives a partial refinement of the Blanco-Koldobsky-Turnsek Theorem in case of two-dimensional polyhedral Banach spaces, under an additional assumption on the norm of the images of the extreme points of the unit ball of the domain.

   \begin{theorem}
   	Let $ \mathbb{X} $ be a two-dimensional polyhedral Banach space whose unit sphere is a $2n$-gon for some $n\in\mathbb N.$ Let $T\in \mathbb L(\mathbb{X}).$ Then the following are equivalent: 
   	\begin{itemize}
   		\item[(i)] $T$ is a scalar multiple of an isometry.
   		\item[(ii)]$T$ preserves Birkhoff-James orthogonality at each extreme point of $B_\mathbb{X}.$ Moreover, for each pair of consecutive extreme points $x_1,x_2$ of  $B_\mathbb{X} $ there exist $s,t\in[0,1]$ such that $u=tx_1+(1-t)x_2,v=sx_1+(1-s)x_2$ and $\|Tu\|=\|Tv\|.$ 
   		\item[(iii)] $T$ preserves Birkhoff-James orthogonality at each extreme point of $B_\mathbb{X}.$ Moreover, $\| Tu\|=\|Tv\|$ for any extreme points $u,v$ of  $B_\mathbb{X}.$
   	\end{itemize} 
   \end{theorem}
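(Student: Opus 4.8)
The plan is to prove the equivalence by establishing the cycle $(i) \Rightarrow (iii) \Rightarrow (ii) \Rightarrow (i)$. The first two implications are the easy ones, and the real work lies in showing $(ii) \Rightarrow (i)$.

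For $(i) \Rightarrow (iii)$: if $T = \alpha S$ for an isometry $S$ and scalar $\alpha$, then $T$ preserves Birkhoff-James orthogonality everywhere (isometries preserve it, and scalar multiplication is harmless by homogeneity), so in particular at each extreme point. Moreover $\|Tu\| = |\alpha|\|Su\| = |\alpha|\|u\| = |\alpha|$ for every $u \in S_{\mathbb{X}}$, so all extreme points (being unit vectors) have images of equal norm. The implication $(iii) \Rightarrow (ii)$ is essentially trivial: given consecutive extreme points $x_1, x_2$, we take $u = x_1$ (i.e. $t=1$) and $v = x_2$ (i.e. $s=0$), both of which lie in $[0,1]$-combinations of the pair, and $\|Tu\| = \|Tv\|$ holds by $(iii)$ since $x_1, x_2$ are themselves extreme points.

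The substantive direction is $(ii) \Rightarrow (i)$. First I would invoke Theorem~\ref{icext}: since $T$ preserves orthogonality at every extreme point of the $2n$-gon $B_{\mathbb{X}}$, the images of two consecutive extreme points are scalar multiples of two consecutive extreme points, and by Lemma~\ref{bijective} $T$ is bijective. Label the extreme points of $B_{\mathbb{X}}$ cyclically as $x_1, x_2, \dots, x_{2n}$ (with $x_{n+i} = -x_i$), so that $T$ carries this cyclic structure to the cyclic structure of extreme points of $B_{\mathbb{X}}$ preserving adjacency. The goal is to produce a single scalar $\alpha$ with $Tx_i = \alpha \cdot (\text{extreme point})$ for all $i$, i.e. all the scaling factors $\|Tx_i\|$ coincide; once they do, $T$ maps the boundary $2n$-gon onto a scaled copy of a $2n$-gon and hence is $\alpha$ times an isometry.

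The heart of the argument is to upgrade the hypothesis in $(ii)$ — equality of norms for \emph{some} convex combination on each edge — into equality of the norms $\|Tx_i\|$ at the \emph{vertices} themselves, i.e. condition $(iii)$. The hard part will be this norm-propagation across an edge. Fix consecutive extreme points $x_1, x_2$ with images $\|Tx_1\|v_k$ and $\|Tx_2\|v_l$ onto consecutive extreme points $v_k, v_l$ of $B_{\mathbb{Y}}$. On the segment $\{tx_1 + (1-t)x_2 : t \in [0,1]\}$, the norm $\|T(tx_1+(1-t)x_2)\|$ is a convex function of $t$; I would analyze how it interpolates between $\|Tx_1\|$ and $\|Tx_2\|$ using that the image segment lies on the edge of the $2n$-gon $B_{\mathbb{Y}}$ joining $v_k$ to $v_l$, where the norm is governed by a single linear functional (the supporting functional of that edge). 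The hypothesis gives two points $u = tx_1+(1-t)x_2$ and $v = sx_1+(1-s)x_2$ on the edge with $\|Tu\| = \|Tv\|$; combined with the piecewise-linear/convex structure of the norm on polyhedral edges, this forces the endpoint norms to agree, $\|Tx_1\| = \|Tx_2\|$. Propagating this equality around the cycle of consecutive vertices yields $\|Tx_i\| = \alpha$ for all $i$, which is exactly $(iii)$, and then $(iii) \Rightarrow (i)$: $T$ maps the unit sphere onto $\alpha S_{\mathbb{Y}}$ and is therefore a scalar multiple of an isometry. The delicate point — and the step I expect to require the most care — is extracting endpoint equality from the interior equality $\|Tu\|=\|Tv\|$, since it depends crucially on the behavior of the norm along a polyhedral edge rather than at a smooth point.
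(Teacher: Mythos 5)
Your proposal is correct, and although you orient the cycle as (i)$\Rightarrow$(iii)$\Rightarrow$(ii)$\Rightarrow$(i) while the paper proves (i)$\Rightarrow$(ii)$\Rightarrow$(iii)$\Rightarrow$(i), the substance is the same: your implication (ii)$\Rightarrow$(i) factors through exactly the paper's two hard steps, namely upgrading the edge condition to equality of the vertex norms (the paper's (ii)$\Rightarrow$(iii)) and then deducing that $T$ is a scalar multiple of an isometry (the paper's (iii)$\Rightarrow$(i)), both resting on Theorem~\ref{icext} and Lemma~\ref{bijective}. Where you differ is in the execution of these steps, and in both places your version is arguably cleaner. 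For the norm propagation, the paper normalizes $S=T/k$, shows $Su$ and $Sv$ lie on a common edge of $S_{\mathbb{X}}$, and reaches a contradiction from the triangle inequality together with the nonexistence of an extreme point between $Su$ and $Sx_1$; your route instead exploits that the image segment lies in the cone $\{av_k+bv_l: a,b\geq 0\}$ over the edge joining the consecutive extreme points $v_k, v_l$ (note: in the cone, not \emph{on} the edge, as you loosely wrote), where the norm coincides with the supporting functional of that edge, so $t\mapsto \|T(tx_1+(1-t)x_2)\| = tk_1+(1-t)k_2$ is affine and equality at two distinct parameters forces $k_1=k_2$. For the final step, the paper uses norm attainment at an extreme point and the argument $\|S^{-1}\|=1$, whereas you observe that the $2n$ vertices map bijectively onto $\alpha$ times the full vertex set, so $T(B_{\mathbb{X}})=\alpha B_{\mathbb{X}}$ and hence $T/\alpha$ is an isometry; this is equally valid and more direct. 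Two points you should make explicit, both of which the paper itself treats silently or not at all: the trivial case $T=0$, and the fact that the equality $\|Tu\|=\|Tv\|$ in (ii) is only useful when $s\neq t$ (equivalently $u\neq v$) --- if $s=t$ were allowed, the ``moreover'' clause of (ii) would be vacuous, and neither your argument nor the paper's establishes (ii)$\Rightarrow$(i) under that reading, so the hypothesis must be interpreted with $u\neq v$.
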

   \begin{proof}
   (i)$\implies$(ii): Follows immediately. \\ 
   
   (ii)$\implies$(iii): If $T=0$ then it follows trivially. Next assume that $T\neq0.$ Let $x_1,x_2$ be two  consecutive extreme points of  $B_\mathbb{X} $ and let $u=tx_1+(1-t)x_2,v=sx_1+(1-s)x_2,$ for some  $s,t\in[0,1],$ such that $\| Tu\|=\|Tv\|=k.$ Clearly, $k\neq0.$  Let $S=\frac{T}{k}.$ Since Birkhoff-James orthogonality is homogeneous, it follows that $S$ preserves Birkhoff-James orthogonality at each extreme point of $B_\mathbb{X}.$ It follows from Theorem \ref{icext} that $y_1$ and $y_2$ are two consecutive extreme points of  $B_\mathbb{X}$, where  $Sx_1=k_1y_1$ and $Sx_2=k_2y_2$ for some  $k_1,k_2>0.$ We claim that $Tu$ and $Tv$ must belong to the same edge of $S_\mathbb{X}.$ On the contrary to our claim suppose that  $Tu$ and $Tv$  belong to two different edges of $S_\mathbb{X}.$ Then there exists an extreme point $y$ between  $Su$ and $Sv$ of $B_\mathbb{X}$ such that $y=\alpha(1-t)Su+\alpha t Sv$ for some $t\in(0,1),\alpha>0.$ Since  $u,v$ belong to the line segment joining $x_1$ and $x_2,$ it follows that $Su,Sv$ belong to the line segment joining $Sx_1$ and $Sx_2.$ So $y=\alpha(1-\lambda)Sx_1+\alpha \lambda Sx_2$ for some $\lambda\in (0,1),$ i.e., $y=\alpha(1-\lambda)k_1y_1+\alpha \lambda k_2y_2.$ This contradicts the fact that $y_1,y_2$ are two consecutive extreme points of $B_\mathbb{X}.$ Thus our claim is established. Let $H$ be a hyperplane supporting to the edge containing $Su$ and $Sv$. So $Sx_1,Su,Sv,Sx_2$ belong to $H.$ Next, we claim that $\|Sx_1\|=\|Sx_2\|.$ On the contrary suppose that $\|Sx_1\|\neq\|Sx_2\|.$ Without loss of generality assume that $\|Sx_1\|>\|Sx_2\|.$ Then $\|Su\|=\|S(tx_1+(1-t)x_2)\|\leq t\|Sx_1\|+(1-t)\|Sx_2\|<\|Sx_1\|.$ This implies that there exists an extreme point $z$ of $B_\mathbb{X}$ between  $Su$ and $Sx_1.$ This contradicts the fact that $y_1$ and $y_2$ are consecutive extreme points of $B_\mathbb{X}.$ Thus, $\|Sx_1\|=\|Sx_2\|.$ This implies  $\|Tx_1\|=\|Tx_2\|.$ Therefore, $\| Tv_1\|=\|Tv_2\|$ for any extreme points $v_1,v_2$ of  $B_\mathbb{X}.$ \\
   
   (iii)$\implies$(i): Let $T\in \mathbb L(\mathbb{X})$ be such that $T$  preserves Birkhoff-James orthogonality at each extreme point of $B_\mathbb{X} $ and  $\| Tu\|=\|Tv\|$ for any two extreme points $u,v$ of  $B_\mathbb{X}.$ If $T=0$ then (i) follows trivially. Let $T\neq0$, then applying Lemma \ref{bijective}, we conclude that $T$ is bijective. From Corollary \ref{ioe}, it follows that the image of each extreme point of $B_\mathbb{X} $ is a scalar multiple of some extreme point of $B_\mathbb{X}.$ Since $\mathbb{X}$ is finite dimensional, $T$ must attains its norm at an  extreme point of $B_\mathbb{X}.$ Also,  we have  $\| Tu\|=\|Tv\|$ for any two extreme points $u,v$ of  $B_\mathbb{X}.$  Thus  $\| Tu\|=\|T\|$ for all extreme points $u$ of $B_\mathbb{X}.$ Let $S=\frac{T}{\|T\|}.$ Then $S$ maps each extreme point of $B_\mathbb{X} $ to some extreme point of $B_\mathbb{X}.$ Since $T$ is bijective, $S$ is bijective. Hence $S^{-1}$ exists and maps each extreme point of $B_\mathbb{X} $ to some extreme point of $B_\mathbb{X}.$ Now, $S^{-1}$ also attains its norm at some extreme point of $B_\mathbb{X}.$ So $\|S^{-1}\|=1.$ 
   	Hence  $\|Sx\|\leq \|x\|$ for all $x\in\mathbb{X}$ and $\|x\|=\|S^{-1}(Sx)\|\leq \|Sx\|.$ This implies $\|Sx\|=\|x\|$ for all  $x\in\mathbb{X}.$ So $S$ is an isometry. Thus $T$ is a scalar multiple of an isometry. 
   \end{proof}

The previous theorem requires an extra condition, in addition to the assumption of preservation of Birkhoff-James orthogonality at each extreme point of the unit ball of the domain. However, as we will prove in the next theorem, a proper refinement of the Blanco-Koldobsky-Turnsek Theorem is indeed possible in case of a certain family of two-dimensional polyhedral Banach spaces.

 \begin{theorem}
	Let $\mathbb{X}$ be the two-dimensional polyhedral Banach space whose unit sphere is the  regular $2n$-gon with vertices $$v_j=(\cos(j-1)\frac{\pi}{n},\sin(j-1)\frac{\pi}{n}), 1\leq j\leq 2n.$$ If  $T\in\mathbb L(\mathbb X)$ preserves  Birkhoff-James orthogonality at any two consecutive vertices of $S_{\mathbb X}$ then $T$ is a scalar multiple of an isometry.
\end{theorem}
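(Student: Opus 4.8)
The plan is to exploit the rigidity of the regular polygon together with the two facts already established for two-dimensional polyhedral spaces. Fix the two consecutive vertices at which $T$ preserves orthogonality and relabel them $v_1,v_2$ (the symmetry of the regular structure makes the particular choice immaterial). If $T=0$ the conclusion holds trivially, so assume $T\neq 0$. Since $v_1,v_2$ are linearly independent extreme points, Lemma \ref{bijective} shows that $T$ is bijective, and Corollary \ref{ioe} shows that $Tv_1=\alpha_1 v_{k_1}$ and $Tv_2=\alpha_2 v_{k_2}$ for some vertices $v_{k_1},v_{k_2}$ and nonzero scalars $\alpha_1,\alpha_2$. Because the $2n$-gon is centrally symmetric ($-v_k=v_{k+n}$), I may absorb the signs into the vertex labels and assume $\alpha_1,\alpha_2>0$; bijectivity then forces $v_{k_1},v_{k_2}$ to be distinct and non-antipodal.

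The key observation is that the edge joining $v_1$ and $v_2$ witnesses orthogonality at \emph{both} of its endpoints simultaneously. Indeed, since $[v_1,v_2]$ is an edge of $S_{\mathbb X}$, its affine hull supports $B_{\mathbb X}$, so $\|v_1+\lambda(v_2-v_1)\|\geq 1$ for every scalar $\lambda$, i.e. $v_1\perp_B(v_2-v_1)$, and symmetrically $v_2\perp_B(v_2-v_1)$. Applying the hypothesis of preservation at $v_1$ and at $v_2$ to the single vector $v_2-v_1$ yields $Tv_1\perp_B(Tv_2-Tv_1)$ and $Tv_2\perp_B(Tv_2-Tv_1)$. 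Writing $P_i=Tv_i$, the convex function $\lambda\mapsto\|P_1+\lambda(P_2-P_1)\|$ then attains its minimum both at $\lambda=0$ and at $\lambda=1$, hence is constant on $[0,1]$. Consequently $\|P_1\|=\|P_2\|$, so $\alpha_1=\alpha_2=:\alpha$, and the whole segment $[\alpha v_{k_1},\alpha v_{k_2}]$ lies on the sphere of radius $\alpha$. Dividing by $\alpha$, the chord $[v_{k_1},v_{k_2}]$ lies in $S_{\mathbb X}$; since no three vertices of a regular polygon are collinear, this forces $v_{k_1},v_{k_2}$ to be consecutive vertices.

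It remains to upgrade this combinatorial information to the stated conclusion. The linear isometry group of the regular $2n$-gon is the dihedral group $D_{2n}$, which acts simply transitively on the $4n$ directed edges of the polygon. Hence there is a unique isometry $R$ with $Rv_1=v_{k_1}$ and $Rv_2=v_{k_2}$. Setting $S=T/\alpha$, I have $Sv_1=v_{k_1}=Rv_1$ and $Sv_2=v_{k_2}=Rv_2$; since $\{v_1,v_2\}$ is a basis of $\mathbb X$, the linear maps $S$ and $R$ must coincide, so $S$ is an isometry and $T=\alpha S$ is a scalar multiple of an isometry.

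The main obstacle, and the real content of the argument, is the middle step: recognizing that the single edge $[v_1,v_2]$ acts as an orthogonality witness at both of its endpoints, and then converting the resulting pair of Birkhoff-James orthogonality relations into the two rigid facts that the images have equal norm and land on consecutive vertices. The closing rigidity step is essentially automatic from the simple transitivity of the dihedral action on directed edges, and it is precisely here that the \emph{regularity} of the polygon (as opposed to a general $2n$-gon, cf.\ Theorem \ref{icext}) is indispensable, since it supplies enough symmetries to realize the required edge-to-edge map by a genuine isometry.
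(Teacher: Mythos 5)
Your proof is correct, and although its opening coincides with the paper's (the trivial case $T=0$, Lemma \ref{bijective} for bijectivity, Corollary \ref{ioe} for mapping vertices to scalar multiples of vertices, and the observation that the edge direction $v_2-v_1$ is Birkhoff-James orthogonal to both endpoints of the edge), the second half takes a genuinely different route. The paper obtains $\|Tv_1\|=\|Tv_2\|$ from the same two orthogonality relations, but via explicit substitutions ($\lambda=-1/k_1$, etc.) rather than your two-global-minima convexity argument; it then establishes consecutiveness of the image vertices by a separate computational step: it applies the preservation hypothesis at $v_1$ a \emph{second} time, to the vector $(0,1)$, computes $T_1(0,1)$ in coordinates, and uses the two extreme support functionals at the image vertex to force a sign condition that yields $|t-s|\in\{1,2n-1\}$; finally it writes $T_1$ out as an explicit rotation or reflection matrix. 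You instead squeeze consecutiveness out of the \emph{same} pair of orthogonality relations already used for equal norms: constancy of the convex function on $[0,1]$ puts the entire chord $[v_{k_1},v_{k_2}]$ on the unit sphere, and a chord of a polygon contained in the sphere must be an edge. You then close with simple transitivity of the dihedral group on directed edges instead of matrix computations (the group element you invoke is exactly the paper's explicit matrix). Your route is shorter, coordinate-free, uses strictly less of the hypothesis (only the edge direction at each of the two vertices), and isolates cleanly where regularity of the polygon enters -- only in the final symmetry step, since the chord-on-sphere argument is valid in any two-dimensional polyhedral space; the paper's route has the virtue of being fully self-contained and exhibiting the isometry concretely.

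Two small points you should tighten. First, the inference ``the chord $[v_{k_1},v_{k_2}]$ lies in $S_{\mathbb X}$ and no three vertices are collinear, hence the vertices are consecutive'' deserves one more line: take a supporting functional $f$ at the midpoint of the chord; then $f(v_{k_1})=f(v_{k_2})=1$, so both vertices lie on a common proper face of $B_{\mathbb X}$, which is an edge and contains exactly its two endpoint vertices. Second, you only need that the dihedral group is \emph{contained} in the linear isometry group of $\mathbb X$ and acts transitively on directed edges (existence of $R$); the assertion that it is the full isometry group, and the uniqueness of $R$, are true but unnecessary.
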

\begin{proof}
	If $T=0$ then the theorem is trivially true. Let $T\neq0.$ First we claim that if $T$ preserves Birkhoff-James orthogonality at $v_i,v_{i+1}$ ($v_{2n+1}=v_1$)  for some $1\leq i\leq  2n$ then $\|Tv_i\|=\|Tv_{i+1}\|.$ It follows from Lemma \ref{bijective} that $T$ is bijective. From Corollary \ref{ioe}, we have $Tv_i=k_1v_k$ and $Tv_{i+1}=k_2v_l$ for some $1\leq k,l\leq 2n$ and $k_1,k_2(\neq0)\in\mathbb{R}.$ Note that the support functional supporting the edge of $S_\mathbb X$ joining $v_i$ and $v_{i+1}$ is $$f(\alpha,\beta )=\frac{1}{\cos\frac{\pi}{2n}}\Big(\alpha\cos\frac{(2i-1)\pi}{2n}+ \beta\sin\frac{(2i-1)\pi}{2n}\Big).$$ 
	Now, it is easy to see that $f({v_i-v_{i+1}})=0.$ So  $v_i \perp_B ({v_i-v_{i+1}}) $ and $     v_{i+1}\perp_B (v_i-v_{i+1}).$ Since $T$ preserves Birkhoff-James orthogonality at $v_i,v_{i+1}$ it follows that $Tv_i\perp_B T({v_i-v_{i+1}}),    Tv_{i+1}\perp_B T({v_i-v_{i+1}})$ and this implies that $v_k\perp_B {(k_1v_k-k_2v_l)},   v_l\perp_B {(k_1v_k-k_2v_l)}.$ Then $  \|v_k+\lambda {(k_1v_k-k_2v_l)}\|\geq \|v_1\|$ for all $\lambda\in\mathbb{R}.$ Let $\lambda=-\frac{1}{k_1},$ then $\frac{|k_2| }{|k_1|}\geq1.$ Similarly, from $v_l\perp_{B}{(k_1v_k-k_2v_l)},$ we get  $\frac{|k_1|}{|k_2|}\geq1.$
	So $|k_1|=|k_2|.$ Therefore, $\|Tv_i\|=\|Tv_{i+1}\|.$\\
	Without loss of generality assume that   $T$ preserves Birkhoff-James orthogonality at $v_1,v_2.$ So  $Tv_1=kv_s$ and $Tv_2=kv_t$  for some $1\leq s,t\leq 2n$ and $k(\neq0)\in\mathbb{R}.$ Let $T_1=\frac{T}{k}.$ Hence
	\begin{eqnarray*}
		T_1(x,y)&=&\frac{x\sin\frac{\pi}{n}-y\cos\frac{\pi}{n}}{\sin\frac{\pi}{n}}\Big(\cos\frac{(s-1)\pi}{n},\sin\frac{(s-1)\pi}{n}\Big)\\
		&~~&+\frac{y}{\sin\frac{\pi}{n}}\Big(\cos\frac{(t-1)\pi}{n},\sin\frac{(t-1)\pi}{n}\Big)\\
		&=&\frac{1}{\sin\frac{\pi}{n}}\Big(x\sin\frac{\pi}{n}\cos\frac{(s-1)\pi}{n}-y\big(\cos\frac{\pi}{n}\cos\frac{(s-1)\pi}{n}-\cos\frac{(t-1)\pi}{n}\big),\\
		&~~&x\sin\frac{\pi}{n}\sin\frac{(s-1)\pi}{n}-y\big(\cos\frac{\pi}{n}\sin\frac{(s-1)\pi}{n}-\sin\frac{(t-1)\pi}{n}\big)\Big).
	\end{eqnarray*}
	Since $v_1\perp_B (0,1)$, $v_s\perp_B T_1(0,1).$ Now $$T_1(0,1)=\frac{1}{\sin\frac{\pi}{n}}\Big(\cos\frac{(t-1)\pi}{n}-\cos\frac{\pi}{n}\cos\frac{(s-1)\pi}{n},\sin\frac{(t-1)\pi}{n}-\cos\frac{\pi}{n}\sin\frac{(s-1)\pi}{n}\Big).$$
	Let $f_1$ and $f_2$ be two extreme support functionals of $v_s.$ Then
	\begin{eqnarray*}
		f_1(\alpha,\beta ) &=&\frac{1}{\cos\frac{\pi}{2n}}\Big(\alpha\cos\frac{(2s-3)\pi}{2n}+\beta\sin\frac{(2s-3)\pi}{2n}\Big)\\
		\mbox{and}\,f_2(\alpha,\beta )&=&\frac{1}{\cos\frac{\pi}{2n}}\Big(\alpha\cos\frac{(2s-1)\pi}{2n}+\beta\sin\frac{(2s-1)\pi}{2n}\Big).
	\end{eqnarray*}
	Hence 
	\begin{eqnarray*}
		f_1(T_1(0,1))&=&\frac{1}{\cos\frac{\pi}{2n}\sin\frac{\pi}{n}}\Big(\cos\frac{(2(t-s)+1)}{2n}-\cos\frac{\pi}{2n}\cos\frac{\pi}{n}\Big)\\
		\mbox{and}\,f_2(T_1(0,1))&=&\frac{1}{\cos\frac{\pi}{2n}\sin\frac{\pi}{n}}\Big(\cos\frac{(2(t-s)-1)}{2n}-\cos\frac{\pi}{2n}\cos\frac{\pi}{n}\Big).
	\end{eqnarray*}
	Since $v_s\perp_B T_1(0,1),$ there exists a   $\lambda\in[0,1]$ such that $$(1-\lambda)f_1(T_1(0,1))+\lambda f_2(T_1(0,1))=0.$$
	This implies $f_1(T_1(0,1))f_2(T_1(0,1))\leq0.$ Since $|t-s|<2n,$ it follows from $f_1(T_1(0,1))f_2(T_1(0,1))\leq0$ that $|t-s|=1,2n-1.$
	Therefore, either $T_1v_1=v_s $ and $  T_1v_2=v_{s+1}$ or $T_1v_1=v_{s+1} $ and $  T_1v_2=v_s$ for some  $1\leq s\leq 2n.$
	If $T_1v_1=v_s $ and $  T_1v_2=v_{s+1},$ then 
	\begin{eqnarray*}
		T_1(x,y)
		&=&\frac{x\sin\frac{\pi}{n}-y\cos\frac{\pi}{n}}{\sin\frac{\pi}{n}}\Big(\cos\frac{(s-1)\pi}{n},\sin\frac{(s-1)\pi}{n}\Big)+\frac{y}{\sin\frac{\pi}{n}}\Big(\cos\frac{s\pi}{n},\sin\frac{s\pi}{n}\Big)\\
		&=&\Big(x\cos\frac{(s-1)\pi}{n}-y\sin\frac{(s-1)\pi}{n},x\sin\frac{(s-1)\pi}{n}+y\cos\frac{(s-1)\pi}{n}\Big)\\
		&=&\begin{pmatrix}
			\cos\frac{(s-1)\pi}{n}&-\sin\frac{(s-1)\pi}{n}\\
			\sin\frac{(s-1)\pi}{n}&\cos\frac{(s-1)\pi}{n}
		\end{pmatrix}
		\begin{pmatrix}
			x\\
			y
		\end{pmatrix}.
	\end{eqnarray*}
	This implies that $T_1$ is an isometry.
	If $T_1v_1=v_{s+1} $ and $ T_1v_2=v_s,$ then 
	\begin{eqnarray*}
		T_1(x,y)
		&=&\frac{x\sin\frac{\pi}{n}-y\cos\frac{\pi}{n}}{\sin\frac{\pi}{n}}\Big(\cos\frac{s\pi}{n},\sin\frac{s\pi}{n}\Big)+\frac{y}{\sin\frac{\pi}{n}}\Big(\cos\frac{(s-1)\pi}{n},\sin\frac{(s-1)\pi}{n}\Big)\\
		&=&\Big(x\cos\frac{s\pi}{n}+y\sin\frac{s\pi}{n},x\sin\frac{s\pi}{n}-y\cos\frac{s\pi}{n}\Big)\\
		&=&\begin{pmatrix}
			\cos\frac{s\pi}{n}&\sin\frac{s\pi}{n}\\
			\sin\frac{s\pi}{n}&-\cos\frac{s\pi}{n}
		\end{pmatrix}
		\begin{pmatrix}
			x\\
			y
		\end{pmatrix}.
	\end{eqnarray*}
	Again this implies that  $T_1$ is an isometry. This completes the proof of the theorem.
	
\end{proof}

\begin{remark}
	It is clear from the above theorem that if $\mathbb{X}$ is the  two-dimensional polyhedral Banach space whose unit sphere is the  regular $2n$-gon then $\kappa(\mathbb {X})=2.$
	\end{remark}

In case of a general polyhedral Banach space of dimension strictly greater than $ 2, $ obtaining a refinement of the Blanco-Koldobsky-Turnsek Theorem seems to be a difficult task. This is primarily due to the lack of regularity in the geometric structures of the unit balls of such spaces. Nevertheless, as we will see next in the specific cases of $ \ell_{\infty}^n $ and $ \ell_{1}^n, $ substantial refinements of the said theorem is indeed possible when the norm is well-behaved.

   \begin{lemma}\label{nel}
   	Let $\mathbb{X}=\ell_{\infty}^n.$ If $T\in \mathbb L(\mathbb{X})$ preserves Birkhoff-James orthogonality at two extreme points $u_1,u_2$ of $B_{\mathbb{X}}$, then $\|Tu_1\|=\|Tu_2\|.$
   \end{lemma}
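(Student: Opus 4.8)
The plan is to exhibit a single auxiliary vector $z$ to which \emph{both} extreme points are Birkhoff-James orthogonal and whose image under $T$ is simply $Tu_1 - Tu_2$; the preservation hypothesis, combined with the bare definition of Birkhoff-James orthogonality, will then pin down the two norms. Recall that the extreme points of $B_{\ell_{\infty}^n}$ are precisely the vectors all of whose coordinates are $\pm 1$, so relative to the standard basis $\mathcal{B}$ we have $[u_1]_{\mathcal{B}}, [u_2]_{\mathcal{B}} \in \{-1,1\}^n$. If $u_1 = u_2$ the conclusion is immediate, and if $u_2 = -u_1$ then $Tu_2 = -Tu_1$, whence $\|Tu_1\| = \|Tu_2\|$. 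Thus I may assume $u_1 \neq \pm u_2$.

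First I would set $z = u_1 - u_2$ and establish the key claim that $u_1 \perp_B z$ and $u_2 \perp_B z$; this is the only step where the geometry of $\ell_{\infty}^n$ is used. Since $u_1 \neq -u_2$, there is an index $i_0$ with $(u_1)_{i_0} = (u_2)_{i_0}$. For every scalar $\lambda$, the $i_0$-th coordinate of $u_1 + \lambda z = (1+\lambda)u_1 - \lambda u_2$ equals $(u_1)_{i_0}$, which has absolute value $1$; consequently $\|u_1 + \lambda z\| \geq 1 = \|u_1\|$ for all $\lambda$, that is, $u_1 \perp_B z$. The identical computation applied to $u_2 + \lambda z = \lambda u_1 + (1-\lambda)u_2$ gives $u_2 \perp_B z$.

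Next, since $T$ preserves Birkhoff-James orthogonality at $u_1$ and at $u_2$, invoking the definition of preservation with the test vector $z$ yields $Tu_1 \perp_B Tz$ and $Tu_2 \perp_B Tz$, where $Tz = Tu_1 - Tu_2$ by linearity. Finally I would read off the two desired inequalities directly from the definition of Birkhoff-James orthogonality: from $Tu_1 \perp_B (Tu_1 - Tu_2)$, the choice $\lambda = -1$ gives $\|Tu_2\| = \|Tu_1 - (Tu_1 - Tu_2)\| \geq \|Tu_1\|$, while from $Tu_2 \perp_B (Tu_1 - Tu_2)$, the choice $\lambda = 1$ gives $\|Tu_1\| \geq \|Tu_2\|$. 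Combining these forces $\|Tu_1\| = \|Tu_2\|$.

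The only genuine content lies in the second step---recognizing that $z = u_1 - u_2$ is simultaneously Birkhoff-James orthogonal to both $u_1$ and $u_2$---and I expect this identification of the correct auxiliary vector to be the crux; once it is in hand the remainder is elementary and, notably, needs neither Corollary \ref{ioe} nor any non-vanishing assumption on $Tu_1, Tu_2$. I would take care to dispose of the degenerate case $u_2 = -u_1$ at the very start, since there no agreement coordinate $i_0$ exists and the double orthogonality of $z$ breaks down, requiring the separate (trivial) argument given above.
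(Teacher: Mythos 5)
Your proof is correct, and it shares the paper's central insight --- namely, that an auxiliary vector built from the two extreme points (you use $u_1-u_2$; the paper uses $u_1+u_2$, an immaterial sign difference since agreement of coordinates for one is disagreement for the other) is Birkhoff--James orthogonal to \emph{both} $u_1$ and $u_2$, so that the preservation hypothesis transfers this double orthogonality to the images. Where you genuinely diverge is in the endgame. The paper first invokes Corollary \ref{ioe} to write $Tu_1=k_1v_1$ and $Tu_2=k_2v_2$ with $v_1,v_2$ extreme points of $B_{\mathbb{X}}$, then splits into cases according to whether $k_1$ or $k_2$ vanishes, and in the generic case chooses $\lambda=-1/k_1$ and $\lambda=-1/k_2$ to extract $|k_2|/|k_1|\geq 1$ and $|k_1|/|k_2|\geq 1$ (using $\|v_1\|=\|v_2\|=1$). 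You instead test with $\lambda=\mp 1$ directly in the definition of $\perp_B$, which yields $\|Tu_2\|\geq\|Tu_1\|$ and $\|Tu_1\|\geq\|Tu_2\|$ with no division, no case analysis on vanishing images, and no appeal to Corollary \ref{ioe} at all. Your route is therefore more elementary and strictly more general: it proves the statement for any $T\in\mathbb{L}(\mathbb{X},\mathbb{Y})$ with arbitrary codomain $\mathbb{Y}$, since after the $\ell_\infty^n$-specific step nothing about the geometry of the target space is used. The paper's detour through Corollary \ref{ioe} costs generality and length but keeps the lemma's proof stylistically aligned with the subsequent theorem, where that corollary is needed anyway; as a self-contained argument, yours is the tighter one.
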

   \begin{proof}
   	Let $T\in\mathbb L(\mathbb{X})$ be such that $T$ preserves Birkhoff-James orthogonality at two extreme points $u_1,u_2$  of $B_{\mathbb{X}}.$ Therefore, it follows from Corollary \ref{ioe} that $Tu_1=k_1v_1$ and $Tu_2=k_2v_2$ for some extreme points $v_1,v_2$ and $k_1,k_2\in\mathbb{R}.$
   	If $u_1,u_2$ are linearly dependent, then $u_1=\alpha u_2$ for some $\alpha\in\mathbb R.$ Since $u_1,u_2$ are extreme points of $B_{\mathbb{X}},$ it follows that $|\alpha|=1$. Now, $|k_1|=\|k_1v_1\|=\|Tu_1\|=\|T(\alpha u_2)\|=\|\alpha k_2 v_2\|=|k_2|=\|Tu_2\|.$ Hence $\|Tu_1\|=\|Tu_2\|.$ 
   	Next, assume that $u_1,u_2$ are linearly independent. We claim that $u_1,u_2\perp_{B} {(u_1+u_2)}.$  Let $u_1=(a_1,a_2, \ldots, a_n) $ and $ u_2 = (b_1,b_2,\ldots,b_n),$ where $ a_i=\pm 1, b_j = \pm 1,1\leq i,j\leq n.$ Since $u_1,u_2$ are linearly independent, there exist $r,s \in\{1,2,\dots,n\}$ such that
   	$a_r= b_r $ and $ a_s = -b_s.$  Then  $ u_1 + u_2  = ( a_1+b_1, a_2 + b_2, \ldots, a_r + b_r, \ldots, a_s + b_s, \ldots, a_n + b_n)= (a_1 +b_1, a_2+b_2, \ldots, 2a_r,\ldots,0,\ldots,a_n+b_n).$  It is easy to verify that $u_1,u_2\perp_{B} {(u_1+u_2)}.$ Thus our claim is established. 
   	If $|k_1|=|k_2|=0$ then $\|Tu_1\|=\|Tu_2\|.$
   	If  $|k_1|=0$, then $u_2\perp_{B}{(u_1+u_2)}\implies Tu_2\perp_{B}T{(u_1+u_2)}\implies k_2v_2\perp_{B}{(k_2v_2)}\implies k_2=0.$ So $\|Tu_1\|=\|Tu_2\|=0.$
   	Similarly, if  $|k_2|=0,$ then $\|Tu_1\|=\|Tu_2\|=0.$
   	Finally, assume that  $|k_1|\neq 0$ and  $|k_2|\neq 0.$ Since $T$ preserves Birkhoff-James orthogonality at $u_1,u_2,$ it follows that $v_1\perp_{B}{(k_1v_1+k_2v_2)}$ and $v_2\perp_{B}{(k_1v_1+k_2v_2)}.$
   	Now, $v_1\perp_{B}{(k_1v_1+k_2v_2)}\implies \|v_1+\lambda {(k_1v_1+k_2v_2)}\|\geq \|v_1\|$ for all $\lambda\in\mathbb{R}.$ Let $\lambda=\frac{-1}{k_1},$ then $\frac{|k_2| }{|k_1|}\geq1.$ Similarly, from $v_2\perp_{B}{(k_1v_1+k_2v_2)} ,$ we have  $\frac{|k_1|}{|k_2|}\geq1.$
   	Therefore, $|k_2|=|k_1|.$ Hence $\|Tu_1\|=\|Tu_2\|.$
   \end{proof}
   
   We next present a refinement of the Blanco-Koldobsky-Turnsek Theorem for $ \ell_{\infty}^n. $
   
   \begin{theorem}
   	Let $\mathbb{X}=\ell_{\infty}^n.$ If $T\in \mathbb L(\mathbb{X})$ preserves Birkhoff-James orthogonality at all extreme points of $B_{\mathbb{X}},$ then $T$ is a scalar multiple of an isometry.
   \end{theorem}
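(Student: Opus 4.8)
The plan is to reduce $T$ to a scaled operator that permutes the vertices of the cube and then show that such an operator must carry the unit ball onto itself, hence be an isometry. If $T=0$ the statement is trivial, so assume $T\neq 0$. The extreme points of $B_{\ell_\infty^n}$ are precisely the sign vectors in $\{-1,1\}^n$, and these span $\mathbb{X}$. First I would apply Corollary \ref{ioe} to see that each extreme point $u$ is mapped by $T$ to a scalar multiple of an extreme point, and then Lemma \ref{nel} to conclude that $\|Tu\|$ takes a common value $k$ over all extreme points $u$. Since $T\neq 0$ and the extreme points span $\mathbb{X}$, we must have $k\neq 0$. Setting $S=\tfrac{1}{k}T$, homogeneity of $\perp_B$ guarantees that $S$ still preserves Birkhoff-James orthogonality at every extreme point, while now $\|Su\|=1$ for every extreme point $u$; as a norm-one scalar multiple of an extreme point, each $Su$ is itself an extreme point, i.e.\ a sign vector. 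Thus $S$ maps the vertex set $V=\{-1,1\}^n$ into itself.

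The heart of the argument, and the step I expect to be the main obstacle, is showing that $S$ is injective (equivalently bijective). Here I would argue by contradiction, combining a counting principle with the fact that no nonzero vector is Birkhoff-James orthogonal to itself. Suppose $S$ is not injective; then $W:=\operatorname{Range}(S)$ is a proper subspace of $\mathbb{X}$. Since the $2^n$ sign vectors span $\mathbb{X}$, a proper subspace cannot contain all of them, so $W$ contains strictly fewer than $2^n$ sign vectors. But every image $Su$ with $u\in V$ is a sign vector lying in $W$, so the map $u\mapsto Su$ sends the $2^n$ vertices into a set of size less than $2^n$; by the pigeonhole principle there exist distinct vertices $u\neq u'$ with $Su=Su'$.

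Next I would check that such $u,u'$ cannot be antipodal: if $u'=-u$ then $Su'=-Su=Su$ would force $Su=0$, contradicting $\|Su\|=1$. Hence $u$ and $u'$ agree in at least one coordinate and differ in at least one coordinate. A direct computation then gives $\|u+\lambda u'\|_\infty=\max(|1+\lambda|,|1-\lambda|)=1+|\lambda|\geq 1$ for all scalars $\lambda$, so that $u\perp_B u'$. Since $S$ preserves Birkhoff-James orthogonality at $u$, it follows that $Su\perp_B Su'$; but $Su=Su'$, so $Su\perp_B Su$, which is impossible for the nonzero vector $Su$. This contradiction shows that $S$ is injective, hence bijective.

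Finally, being an injection of the finite set $V$ into itself, $S$ restricts to a bijection of $V$, and therefore $S(B_\mathbb{X})=S(\operatorname{co}(V))=\operatorname{co}(S(V))=\operatorname{co}(V)=B_\mathbb{X}$. A linear bijection carrying the unit ball onto itself is an isometry, since $S^{-1}(B_\mathbb{X})=B_\mathbb{X}$ gives $\|Sx\|=\inf\{t>0:x\in tS^{-1}(B_\mathbb{X})\}=\inf\{t>0:x\in tB_\mathbb{X}\}=\|x\|$. Thus $S$ is an isometry and $T=kS$ is a scalar multiple of an isometry, completing the proof.
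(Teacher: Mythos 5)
Your proof is correct, and although it shares the paper's two imported ingredients (Corollary \ref{ioe} and Lemma \ref{nel}), both places where real work is done take a genuinely different route. For bijectivity, the paper constructs an explicit Auerbach basis of sign vectors $e_i$ (with $e_i(j)=-1$ for $j<i$ and $e_i(j)=1$ for $j\geq i$), verifies $e_i\perp_B span~(A\setminus\{e_i\})$, and argues that a non-bijective $T$ would give $Te_i\perp_B Te_i$, hence $Te_i=0$, which Lemma \ref{nel} then propagates to $T=0$. You instead normalize first, so that $S=T/k$ maps the vertex set $V=\{-1,1\}^n$ into itself, and run a counting argument: a proper subspace contains strictly fewer than $2^n$ sign vectors, so two distinct vertices $u\neq u'$ collide under $S$; they cannot be antipodal (else $Su=0$, against $\|Su\|=1$), hence by your direct computation $u\perp_B u'$, and preservation at $u$ gives $Su\perp_B Su'=Su$, impossible. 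The underlying mechanism is the same in both proofs (no nonzero vector is Birkhoff-James orthogonal to itself), but your pigeonhole reaches it without any basis construction, dispensing with the paper's Step 1 entirely; note also that your orthogonality computation for distinct non-antipodal vertices is essentially the computation inside the paper's Lemma \ref{nel}. For the final step, the paper shows $\|S\|=\|S^{-1}\|=1$ via norm attainment at extreme points, whereas you observe that $S$, being an injection of the finite set $V$ into itself, permutes $V$, hence fixes $B_{\mathbb{X}}=co(V)$ as a set, and conclude by the Minkowski functional. Both are valid; the paper's norm-attainment argument is the one it reuses verbatim for $\ell_1^n$ in Theorem \ref{l1iso}, though your counting argument would adapt there as well, since distinct non-antipodal extreme points of $B_{\ell_1^n}$ are likewise mutually orthogonal.
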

   \begin{proof}
   	We complete the proof in three steps. In Step 1, we show that there exists an Auerbach basis for $\mathbb X.$ Next in Step 2, we show that $T$ is either the zero operator or bijective. Finally in Step 3,  we show that $T$ is a  scalar multiple of an isometry.
   	
   	\medskip
   	
   	\noindent \textbf{Step 1:}
   	Let $A=\{e_i=(e_i(1),e_i(2),\dots,e_i(n)):1\leq i\leq n\}\subset Ext(B_{\mathbb{X}}),$ where for each $1 \leq i,j\leq n,$
   	\begin{eqnarray*}
   		e_i(j)&=-1, & j<i\\
   		&=1, & j\geq i .
   	\end{eqnarray*} 
   	Clearly, $A$ is a basis for $\mathbb X.$ We claim that $A$ is an Auerbach basis for $\mathbb X.$  For this first we show that $e_1\perp_B span~(A\backslash \{e_1\}).$ Let $x\in span~(A\backslash \{e_1\}) $ be arbitrary, then $x=(a_1,a_2,\dots,a_n)=\sum_{j=2}^{n}\alpha_je_j,$ for some $\alpha_j\in\mathbb R.$ Then it is easy to see that $a_n=-a_1=\alpha_2+\alpha_3+\dots+\alpha_n$. So $\|e_1+\lambda x\|\geq max\{|1-\lambda a_1|,|1+\lambda a_1|\}\geq1=\|e_1\|.$ Hence $e_1\perp_B x.$ Since $x$ is taken arbitrarily from $span~(A\backslash\{e_1\})$, it follows that  $e_1\perp_B span~(A\backslash \{e_1\}).$ Next, we  show that for each $2\leq i\leq n$,  $e_i \perp_B span~(A \setminus \{e_i\}).$  Let $x\in span~(A\backslash\{e_i\}) $ be arbitrary, then  $x=(a_1,a_2,\dots,a_n)=\sum_{j=1,j\neq i}^{n}\alpha_je_j,$ for some $\alpha_j\in\mathbb R.$  Then it is easy to see that $a_{i-1}=a_i=\alpha_1+\alpha_2+\dots+\alpha_{i-1}-\alpha_{i+1}-\dots-\alpha_n$. So $\|e_i+\lambda x\| \geq max\{|1+\lambda a_i|,|-1+\lambda a_i|\}\geq1=\|e_i\|.$ Hence $e_i \perp_B x.$ Since $x$ is taken arbitrarily from $span~(A \setminus \{e_i\}),$ it follows that  $e_i \perp_B span~(A \setminus \{e_i\})$ for each $2\leq i\leq n.$  Thus our claim is established that $A$ is an Auerbach basis for $\mathbb X.$
   	
   	\medskip
   	
   	\noindent \textbf{Step 2:} Let $T\neq0.$ We show that  $T$ is bijective. On the contrary suppose that $T$ is not bijective. So there exists $ i\in\{1,2,\dots,n\}$ such that $Te_i\in span~T(A \setminus \{e_i\}).$ Then $Te_i=\sum_{j=1,j\neq i}^{n}\beta_jTe_j$ for some $\beta_j\in\mathbb R .$  This implies that $Te_i=T \big (\sum_{j=1,j\neq i}^{n}\beta_je_j \big ).$ From Step 1 we have $e_i\perp_B \sum_{j=1,j\neq i}^{n}\beta_je_j.$ Since $T$ preserves Birkhoff-James orthogonality at $e_i$, it follows that $Te_i\perp_B T\big (\sum_{j=1,j\neq i}^{n}\beta_je_j \big).$ This implies that $Te_i=0.$ Applying Lemma \ref{nel}, we conclude that $Te_k=0$ for all $k=1,2,\dots,n.$ This  contradicts the hypothesis that $T\neq0.$ Therefore, T is bijective.
   	
   	\medskip
   	\noindent \textbf{Step 3:} From Corollary \ref{ioe} it follows that image of each extreme point of $B_\mathbb{X} $ is a scalar multiple of some extreme point of $B_\mathbb{X} .$ Since $\mathbb{X}$ is finite dimensional, $T$ attains its norm at an  extreme point of $B_\mathbb{X}.$ Since $\| Tu\|=\|Tv\|$ for any  two extreme points $u,v$ of  $B_\mathbb{X},$ it follows that $\| Tu\|=\|T\|$ for all extreme points $u$ of $B_\mathbb{X}.$ Let $S=\frac{T}{\|T\|}.$ Then $S$ maps each extreme point of $B_\mathbb{X} $ to some extreme point of $B_\mathbb{X}.$ Since $T$ is bijective, $S$ is bijective. Hence $S^{-1}$ exists and maps each extreme point of $B_\mathbb{X} $ to some extreme point of $B_\mathbb{X}.$ Now, $S^{-1}$ also attains its norm at some extreme point of $B_\mathbb{X}.$ So $\|S^{-1}\|=1.$ 
   	Hence,  $\|Sx\|\leq \|x\|$ for all $x\in\mathbb{X}$ and $\|x\|=\|S^{-1}(Sx)\|\leq \|Sx\|.$ This implies $\|Sx\|=\|x\|$ for all  $x\in\mathbb{X}.$ So $S$ is an isometry. Thus $T$ is a scalar multiple of an isometry. 
   	
   \end{proof}
   
   \begin{remark} (i)  Let $\mathbb{X}=\ell_{\infty}^n.$ Then the set $Ext(B_\mathbb{X})$ is a $\mathcal K$-set for  $\mathbb{X}.$ In particular, it follows that $\kappa(\ell_{\infty}^n) \leq 2^{n-1}.$\\
   	
   	(ii) For a  Banach space  $\mathbb{X},$ two minimal $\mathcal K$-sets may not have same cardinality. As for example, consider the Banach space $\mathbb{X}=\ell_{\infty}^2.$ Then $A=\{(1,1),(1,-1)\} $ is a minimal $\mathcal K$-set. Also, $B=\{(1,1),(1,0),(0,1)\}$ is a minimal $\mathcal K$-set, because if $T\in L(\mathbb X)$ preserves Birkhoff-James orthogonality on $B$ then $T$ is a scalar multiple of an isometry. On the other hand, if we consider the set $B_1=\{(1,1),(1,0)\}$ then  $T(x,y)=(x,2y-x)$  preserves Birkhoff-James orthogonality on $B_1$ but 
   	$T$ is not a scalar multiple of an isometry. Again,  if we consider the set $B_2=\{(1,1),(0,1)\}$ then  $T(x,y)=(2x-y,y)$  preserves Birkhoff-James orthogonality on $B_2$ but $T$ is not a scalar multiple of an isometry. Finally, if we consider the set $B_3=\{(1,0),(0,1)\}$ then  $T(x,y)=(x,0)$  preserves Birkhoff-James orthogonality at $B_3$ but $T$ is not a scalar multiple of an isometry. 
   \end{remark}

As the final result of this article, we will obtain a a refinement of the Blanco-Koldobsky-Turnsek Theorem for $ \ell_{1}^n, $ by using the following lemma.

   \begin{lemma}\label{nel2}
   	Let $\mathbb{X}=\ell_1^n.$ If $T\in \mathbb L(\mathbb{X})$ preserves  Birkhoff-James orthogonality at two extreme points $u_1,u_2$ of $B_{\mathbb{X}},$ then $\|Tu_1\|=\|Tu_2\|.$
   \end{lemma}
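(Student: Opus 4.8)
The plan is to follow the template of the proof of Lemma \ref{nel}, replacing the description of the extreme points and the key orthogonality computation by the ones appropriate to $\ell_1^n$. First I would invoke Corollary \ref{ioe} to write $Tu_1 = k_1 v_1$ and $Tu_2 = k_2 v_2$, where $v_1, v_2$ are extreme points of $B_{\mathbb{X}}$ and $k_1, k_2 \in \mathbb{R}$; since $\|v_1\| = \|v_2\| = 1$, the goal $\|Tu_1\| = \|Tu_2\|$ is equivalent to $|k_1| = |k_2|$. If $u_1, u_2$ are linearly dependent, then $u_1 = \alpha u_2$ with $|\alpha| = 1$ (both being extreme points), and $|k_1| = \|Tu_1\| = |\alpha|\,\|Tu_2\| = |k_2|$ immediately.

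For the linearly independent case I would use the fact that the extreme points of $B_{\ell_1^n}$ are exactly $\{\pm e_i : 1 \leq i \leq n\}$, so that $u_1 = \epsilon_1 e_i$ and $u_2 = \epsilon_2 e_j$ for some $i \neq j$ and $\epsilon_1, \epsilon_2 \in \{-1, 1\}$. The crucial step is to exhibit a single vector to which both $u_1$ and $u_2$ are Birkhoff-James orthogonal, and the natural candidate is $w = u_1 + u_2$. I would verify $u_1 \perp_B w$ directly: by homogeneity it suffices to check $e_i \perp_B w$, and a short computation gives $\|e_i + \lambda w\|_1 = |1 + \lambda \epsilon_1| + |\lambda|$, which one checks is $\geq 1$ for every $\lambda \in \mathbb{R}$ by splitting into the cases $\lambda \geq 0$, $-1 \leq \lambda \leq 0$, and $\lambda \leq -1$. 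The same argument yields $u_2 \perp_B w$. This verification is where the geometry of $\ell_1^n$ enters and is the main point to get right, though it is considerably cleaner than the corresponding step in Lemma \ref{nel}, since linearly independent extreme points of $\ell_1^n$ have disjoint supports.

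Having established $u_1, u_2 \perp_B (u_1 + u_2)$, I would apply the hypothesis that $T$ preserves Birkhoff-James orthogonality at $u_1$ and $u_2$ to obtain $k_1 v_1 \perp_B (k_1 v_1 + k_2 v_2)$ and $k_2 v_2 \perp_B (k_1 v_1 + k_2 v_2)$. Before extracting $|k_1| = |k_2|$ I would dispose of the degenerate cases: if $k_1 = k_2 = 0$ there is nothing to prove, while if exactly one of them vanishes, say $k_1 = 0$ and $k_2 \neq 0$, then $T(u_1+u_2) = k_2 v_2$ and the second relation reads $k_2 v_2 \perp_B k_2 v_2$, forcing $v_2 \perp_B v_2$, which is impossible for the nonzero vector $v_2$; hence this case cannot occur.

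Finally, assuming $k_1, k_2 \neq 0$, I would use homogeneity to reduce to $v_1 \perp_B (k_1 v_1 + k_2 v_2)$ and $v_2 \perp_B (k_1 v_1 + k_2 v_2)$, and then feed in specific values of the scalar parameter exactly as in Lemma \ref{nel}: from $\|v_1 + \lambda(k_1 v_1 + k_2 v_2)\| \geq \|v_1\| = 1$ with $\lambda = -1/k_1$ one gets $|k_2|/|k_1| \geq 1$, and from the analogous inequality for $v_2$ with $\lambda = -1/k_2$ one gets $|k_1|/|k_2| \geq 1$. Together these give $|k_1| = |k_2|$, that is, $\|Tu_1\| = \|Tu_2\|$. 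I do not anticipate any genuine obstacle beyond the orthogonality computation in the second paragraph; the remainder is a faithful transcription of the $\ell_\infty^n$ argument.
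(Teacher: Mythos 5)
Your proposal is correct and takes exactly the approach the paper intends: the paper's own proof of Lemma \ref{nel2} merely states that it is analogous to Lemma \ref{nel} and omits the details, and your write-up is precisely that analogous argument carried out, with the key relation $u_1, u_2 \perp_B (u_1+u_2)$ verified via the disjoint supports of linearly independent extreme points of $\ell_1^n$ and the scalars compared by the same $\lambda = -1/k_1$, $\lambda = -1/k_2$ substitutions. There are no gaps.
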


   \begin{proof}
   	The proof is analogous to  Lemma \ref{nel}, and is therefore omitted.
   \end{proof}

   \begin{theorem}\label{l1iso}
   	Let $\mathbb{X}=\ell_1^n.$
   		\begin{itemize}
   		\item[(i)] If $T\in \mathbb L(\mathbb{X})$ preserves  Birkhoff-James orthogonality at all extreme points of $B_{\mathbb{X}}$ then $T$ is a scalar multiple of an isometry.
   	\item[(ii)] $\kappa(\mathbb {X})=n.$ 
   \end{itemize}
   \end{theorem}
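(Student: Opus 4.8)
The plan is to establish (i) by mirroring the three-step argument already used for $\ell_\infty^n$, and then to deduce (ii) from (i) together with Proposition \ref{dokn} and the homogeneity of Birkhoff-James orthogonality. Throughout I write $A=\{e_1,\dots,e_n\}$ for the standard basis, recalling that $\mathrm{Ext}(B_{\ell_1^n})=\{\pm e_1,\dots,\pm e_n\}$.

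For (i), I would dispose of $T=0$ as trivial and assume $T\neq 0$. First I would record that $A$ is an Auerbach basis of $\ell_1^n$ enjoying $e_i\perp_B \mathrm{span}(A\setminus\{e_i\})$: indeed, for $z=\sum_{j\neq i}\beta_j e_j$ one computes $\|e_i+\lambda z\|_1 = 1+|\lambda|\sum_{j\neq i}|\beta_j|\geq 1$ for every scalar $\lambda$. Next I would show $T$ is bijective exactly as in the $\ell_\infty^n$ case: were it not, some $Te_i$ would lie in $\mathrm{span}\,T(A\setminus\{e_i\})$, giving $Te_i=T\big(\sum_{j\neq i}\beta_j e_j\big)$; combining the orthogonality relation above with preservation at $e_i$ forces $Te_i\perp_B Te_i$, hence $Te_i=0$, whereupon Lemma \ref{nel2} yields $Te_k=0$ for all $k$ and contradicts $T\neq 0$.

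For the last step of (i), Corollary \ref{ioe} sends each extreme point of $B_{\ell_1^n}$ to a scalar multiple of an extreme point, and Lemma \ref{nel2} makes all the numbers $\|Te_i\|$ equal; since $T$ attains its norm at an extreme point in finite dimensions, $\|Te_i\|=\|T\|$ for every $i$. Putting $S=T/\|T\|$, the operator $S$ then maps extreme points to extreme points and, being bijective, permutes the $2n$ extreme points, so $S^{-1}$ does likewise. Consequently $\|S\|=\|S^{-1}\|=1$, and the sandwich $\|x\|=\|S^{-1}Sx\|\leq\|Sx\|\leq\|x\|$ shows that $S$ is an isometry, i.e.\ $T$ is a scalar multiple of an isometry.

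For (ii), Proposition \ref{dokn} immediately gives $\kappa(\ell_1^n)\geq n$, since a $\mathcal K$-set must span $\ell_1^n$ and hence contain at least $n$ vectors. For the reverse inequality I would prove that $A=\{e_1,\dots,e_n\}\subset S_{\ell_1^n}$ is a $\mathcal K$-set. The key observation is that, by homogeneity of Birkhoff-James orthogonality, an operator preserves orthogonality at $e_i$ if and only if it preserves orthogonality at $-e_i$; thus preservation at every point of $A$ already forces preservation at each extreme point $\pm e_i$, and part (i) makes $T$ a scalar multiple of an isometry. Hence $A$ is a $\mathcal K$-set with $|A|=n$, so $\kappa(\ell_1^n)\leq n$ and therefore $\kappa(\ell_1^n)=n$. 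I expect the only delicate point of the whole argument to be verifying that $S$ and $S^{-1}$ both have norm one (via the permutation structure on extreme points); the genuinely new content lies in the homogeneity reduction in (ii), which collapses the $2n$-point hypothesis of (i) to an $n$-point $\mathcal K$-set and pins down $\kappa$ exactly.
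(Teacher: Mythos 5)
Your proposal is correct and follows essentially the same route as the paper: the identical three-step argument for (i) (the standard basis as an Auerbach basis, bijectivity forced via the orthogonality relation and Lemma \ref{nel2}, then Corollary \ref{ioe} together with norm attainment at extreme points and the sandwich $\|x\|=\|S^{-1}Sx\|\leq\|Sx\|\leq\|x\|$), and for (ii) the bound $\kappa\geq n$ from Proposition \ref{dokn} combined with showing $\{e_1,\dots,e_n\}$ is a $\mathcal K$-set. Your write-up of (ii) merely makes explicit the homogeneity argument (preservation at $e_i$ is equivalent to preservation at $-e_i$) that the paper leaves implicit in its one-line justification.
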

   \begin{proof}
   	(i) Let $T\in \mathbb L(\mathbb X)$ be such that $T$ preserves Birkhoff-James orthogonality at all extreme points of $B_{\mathbb X}.$ We show that $T$ is a scalar multiple of an isometry. If $T=0$ then the result follows trivially. Assume that $T\neq0.$ Let $A=\{e_i=(e_i(1),e_i(2),\dots,e_i(n)):1\leq i\leq n\}\subset Ext(B_{\mathbb{X}}),$ where for each $1\leq i,j\leq n,$
   	\begin{eqnarray*}
   		e_i(j)&=&0, \,j\neq i\\
   		&=&1,\, j=i .
   	\end{eqnarray*} 
   	Then it is easy to see that $A$ is an Auerbach basis for $\mathbb X.$ 
   	We claim that  $T$ is bijective. On the contrary suppose that $T$ is not bijective. Then there exists $i\in\{1,2,\dots,n \}$ such that $Te_i\in span~(T(A)\backslash \{Te_i\}).$ Then $Te_i=\sum_{j=1,j\neq i}^{n}\beta_jTe_j$ for some $\beta_j\in\mathbb R .$ This implies that $Te_i=T(\sum_{j=1,j\neq i}^{n}\beta_je_j).$ Since $A$ is an Auerbach basis, it follows $e_i\perp_B \sum_{j=1,j\neq i}^{n}\beta_je_j.$ Since $T$ preserves Birkhoff-James orthogonality at $e_i$, it follows That $Te_i\perp_B T(\sum_{j=1,j\neq i}^{n}\beta_je_j).$ This implies that $Te_i=0.$ Applying  Lemma \ref{nel2}, we conclude that $Te_k=0$ for all $1\leq k\leq n.$ This  contradicts the hypothesis that $T\neq 0.$ Thus, our claim is established.\\
   	Now, it follows from Corollary \ref{ioe} that the image of each extreme point of $B_\mathbb{X} $ is a scalar multiple of some extreme point of $B_\mathbb{X} .$ Since $\mathbb{X}$ is finite dimensional, $T$  attains its norm at an extreme point of $B_\mathbb{X}.$ Since $\| Tu\|=\|Tv\|$ for any two extreme points $u,v$ of  $B_\mathbb{X},$ it follows that $\| Tu\|=\|T\|$ for all extreme points $u$ of $B_\mathbb{X}.$ Let $S=\frac{T}{\|T\|}.$ Then $S$ maps each extreme point of $B_\mathbb{X} $ to some extreme point of $B_\mathbb{X}.$ Since $T$ is bijective, $S$ is bijective. Hence $S^{-1}$ exists and maps each extreme point of $B_\mathbb{X} $ to some extreme point  of $B_\mathbb{X}.$ Now, $S^{-1}$ attains its norm at some extreme point of $B_\mathbb{X}.$ So $\|S^{-1}\|=1.$ Hence, for all $x\in\mathbb{X},$ $\|Sx\|\leq \|x\|$ and $\|x\|=\|S^{-1}(Sx)\|\leq \|Sx\|.$ This implies $\|Sx\|=\|x\|$ for all  $x\in\mathbb{X}.$ So $S$ is an isometry. Thus $T$ is a scalar multiple of an isometry. 
   	
   	\medskip 
   	(ii) Since $\mathbb X $ has exactly $2n$ number of extreme points of the form $ \{ \pm e_1, \pm e_2, \ldots, \pm e_n\} $ so the set $A = \{ e_1, e_2, \ldots, e_n \} $ is a minimal  $\mathcal K$-set. So from Proposition \ref{dokn} it follows that $\kappa(\mathbb X) = n.$ 
   \end{proof}

We end this article with the following problem that remains unsolved:\\

\textbf{Open Question:} Characterize the minimal  $\mathcal K$-sets in $  \ell_{\infty}^n$ and hence find $ \kappa(\ell_{\infty}^n). $

\end{document}